
\documentclass[11pt]{amsart}
\usepackage{verbatim}
\usepackage{rotating} 
\usepackage{amssymb}


\hyphenation{arch-i-med-e-an}


\newtheorem{theorem}{Theorem}[section]
\newtheorem{proposition}[theorem]{Proposition}

\newtheorem{lemma}[theorem]{Lemma}

\newtheorem{cor}[theorem]{Corollary}
\newtheorem{question}[theorem]{Question}
\newtheorem{definition}[theorem]{Definition}

\newtheorem{conjecture}[theorem]{Conjecture}

\theoremstyle{plain}
\numberwithin{equation}{theorem}

\theoremstyle{remark}
\newtheorem{claim}[theorem]{Claim}
\newtheorem{remark}[theorem]{Remark}

\newcommand{\Z}{{\mathbb Z}}

\newcommand{\cA}{{\mathcal A}}

\newcommand{\fO}{\mathfrak o}
\newcommand{\fo}{\mathfrak o}
\newcommand{\fq}{\mathfrak q}
\newcommand{\fr}{\mathfrak r}

\newcommand{\fp}{\mathfrak p}
\newcommand{\tfp}{\tilde \fp}

\newcommand{\ffp}{{\mathbb F}_p}

\newcommand{\Kbar}{\overline{K}}

\DeclareMathOperator{\mymod}{mod}

\DeclareMathOperator{\Gal}{Gal}

\DeclareMathOperator{\car}{char}

\DeclareMathOperator{\bN}{\mathbb{N}}

\newcommand{\prob}{{\operatorname{Prob}}}
\newcommand{\cycle}{{\mathcal{C}}}
\newcommand{\ramification}{{\mathcal{R}}}
\newcommand{\erfc}{{\operatorname{erfc}}}
\newcommand{\A}{{\mathbb A}}

\newcommand{\bP}{{\mathbb P}}
\newcommand{\bZ}{{\mathbb Z}}

\newcommand{\bC}{{\mathbb C}}
\newcommand{\bR}{{\mathbb R}}
\newcommand{\bA}{{\mathbb A}} \newcommand{\bQ}{{\mathbb Q}}
\newcommand{\bF}{{\mathbb F}}

\newcommand{\lra}{\longrightarrow}

\newcommand{\cU}{\mathcal{U}}

\newcommand{\cT}{\mathcal{T}}
\newcommand{\cS}{\mathcal{S}}
\newcommand{\cP}{\mathcal{P}}
\newcommand{\cQ}{\mathcal{Q}}
\newcommand{\bQb}{\bar{\bQ}}
\newcommand{\bK}{\overline{K}}
\newcommand{\bL}{\overline{L}}

\usepackage{color}

\DeclareMathOperator{\lcm}{lcm}

\title{Periods of rational maps modulo primes}

\author[Benedetto, Ghioca, Hutz, Kurlberg, Scanlon, Tucker]
{Robert L.  Benedetto, Dragos Ghioca, Benjamin Hutz, P\"{a}r Kurlberg, Thomas
  Scanlon, and Thomas J. Tucker}

\date{July 8, 2011}

\subjclass[2010]{Primary: 14G25.  Secondary: 37F10, 37P55}

\keywords{$p$-adic dynamics, Mordell-Lang conjecture}

\address{Department of Mathematics \\
        Amherst College \\
        Amherst, MA 01002 \\
        USA}
\email{rlb@math.amherst.edu}

\address{Department of Mathematics \\
University of British Columbia\\
Vancouver, BC V6T 1Z2\\
Canada}
\email{dghioca@math.ubc.ca}

\address{ Ph.D. Program in Mathematics\\
Graduate Center of CUNY\\
365 Fifth Avenue\\
New York, NY 10016-4309
}
\email{bhutz@gc.cuny.edu}

\address{
Department of Mathematics\\
KTH\\
SE-100 44 Stockholm\\
Sweden}
\email{kurlberg@math.kth.se}

\address{Mathematics Department \\ University of California Berkeley\\
  Evans Hall \\ Berkeley CA, 94720-3840
}
\email{scanlon@math.berkeley.edu}

\address{Department of Mathematics\\
University of Rochester\\
Rochester, NY 14627\\
USA}
\email{ttucker@math.rochester.edu}

\begin{document}
\begin{abstract}
  Let $K$ be a number field, let $\varphi \in K(t)$ be a rational map
  of degree at least 2, and let $\alpha, \beta \in K$.  We show that
  if $\alpha$ is not in the forward orbit of $\beta$, then there is a
  positive proportion of primes $\fp$ of $K$ such that $\alpha \mymod
  \fp$ is not in the forward orbit of $\beta \mymod \fp$.
Moreover, we show that a similar result holds for several maps and
several points.
We also present heuristic and numerical evidence that a higher
dimensional analog of this result is unlikely to be true if we replace
$\alpha$ by a hypersurface, such as the ramification locus of a
morphism $\varphi : \bP^{n} \to \bP^{n}$.

\end{abstract}
\maketitle

\section{Introduction}\label{intro}

Let $K$ be a number field, and let $\varphi: \bP_K^1 \lra \bP_K^1$ be a
rational map of degree at least 2.
For any integer $m\geq 0$, write
$\varphi^m= \varphi\circ\varphi\circ\cdots\circ\varphi$
for the $m$-th iterate of $\varphi$ under composition.
The \emph{forward orbit} of a point $\alpha\in \bP^1(K)$
under $\varphi$
is the set $\{\varphi^m(\alpha): m\geq 0\}$.
Similarly,
the \emph{backward orbit} of $\alpha$ is the set
$\{\beta \in \bP^1(\bK) \, :\, \phi^m(\beta)
= \alpha \text{ for some } m \geq 0 \}$.
We say $\alpha$ is \emph{$\varphi$-periodic} if
$\varphi^m(\alpha)=\alpha$ for some $m\geq 1$;
the smallest such $m$ is called the (\emph{exact}) \emph{period} of $\alpha$.
More generally, we say $\alpha$ is \emph{$\varphi$-preperiodic}
if its forward orbit is finite;
if the backward orbit of $\alpha$ is finite, we say $\alpha$
is \emph{exceptional} for $\varphi$.

Given two points $\alpha, \beta \in \bP^1(K)$
such that $\beta$ is not $\varphi$-preperiodic and
$\alpha$ is not in the forward orbit of $\beta$
under $\varphi$, one might ask how many primes $\fp$ of $K$
there are such that $\alpha$ \emph{is} in the forward orbit of $\beta$
under $\varphi$ modulo $\fp$.
It follows from \cite[Lemma~4.1]{DML} that there are
infinitely many such $\fp$ unless $\alpha$ is exceptional for $\varphi$.
The same techniques
(essentially, an application of \cite[Theorem~2.2]{SilSiegel})
can be used to show that there are infinitely many
$\fp$ such that $\alpha$ is \emph{not} in the forward orbit of $\beta$
modulo $\fp$.

Odoni \cite{Odoni}, Jones \cite{Jones}, and others have shown
that the set $\cS$ of primes $\fp$
such that $\alpha$ is in the forward orbit of $\beta$
modulo $\fp$ has density zero in some cases.
However, there are cases when $\cS$ has positive density.
For example, if $K=\bQ$ and $\varphi(x) = x^3+1$,
then $\alpha=0$ is in the forward orbit of $\beta=1$
modulo any prime congruent to 2 mod 3.
More generally, one may expect such examples for
\emph{exceptional maps}; for more details, see \cite{Zieve-IMRN}.

The following is a simplified version of the main result of this paper.

\begin{theorem}\label{nicer main}
  Let $K$ be a number field,
  and let $\varphi_1, \dots, \varphi_g : \bP^1_K\lra \bP^1_K$
  be rational maps of degree at least~2.
  Let $\cA_1, \dots, \cA_g$ be finite subsets of $\bP^1(K)$ such that
  for each $i=1,\dots g$, every point in $\cA_i$ is $\varphi_i$-preperiodic.
  Let $\cT_1, \dots, \cT_g$ be finite subsets of $\bP^1(K)$ such that
  no $\cT_i$ contains any $\varphi_i$-preperiodic points.
  Then there is a positive integer $M$ and a set of primes $\cP$ of $K$
  having positive density such that
  for any $i=1,\ldots, g$,
  any $\gamma \in \cT_i$, any $\alpha \in \cA_i$,
  any $\fp\in\cP$, and any $m\geq M$,
  $$ \varphi_i^m(\gamma) \not\equiv \alpha \pmod {\fp}.$$
\end{theorem}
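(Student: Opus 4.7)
The plan is to first reduce to the case where every target $\alpha \in \cA_i$ is $\varphi_i$-periodic (rather than merely preperiodic), then to apply $\fp$-adic analytic techniques at a positive-density set of primes produced uniformly over the finitely many triples $(i,\gamma,\alpha)$.

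For the reduction, let $n_\alpha$ denote the preperiod of $\alpha$ and set $\alpha^\ast := \varphi_i^{n_\alpha}(\alpha)$, which is $\varphi_i$-periodic. The implication $\varphi_i^m(\gamma) \equiv \alpha \pmod{\fp} \Rightarrow \varphi_i^{m+n_\alpha}(\gamma) \equiv \alpha^\ast \pmod{\fp}$ reduces the problem, with $M$ increased by $\max_\alpha n_\alpha$, to the case of periodic targets $\cA_i^\ast := \{\alpha^\ast : \alpha \in \cA_i\}$. Moreover, because each $\gamma \in \cT_i$ is not $\varphi_i$-preperiodic, $\alpha^\ast$ cannot lie in the $K$-orbit of $\gamma$: otherwise $\gamma$'s orbit would eventually merge into the finite orbit of $\alpha^\ast$, making $\gamma$ preperiodic. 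Fix $N$ divisible by every exact period appearing in the $\cA_i^\ast$, and set $\psi_i := \varphi_i^N$, so that each $\alpha^\ast$ is a $\psi_i$-fixed point. Splitting iterates via $m = Nk + j$ reduces the theorem to showing, for each $i$, each $\delta \in \{\varphi_i^j(\gamma) : 0 \leq j < N,\; \gamma \in \cT_i\}$, and each $\psi_i$-fixed $\alpha^\ast$, a threshold $M_{i,\delta,\alpha^\ast}$ and a positive-density set $\cP_{i,\delta,\alpha^\ast}$ of primes such that $\psi_i^k(\delta) \not\equiv \alpha^\ast \pmod{\fp}$ for all $k \geq M_{i,\delta,\alpha^\ast}$ and all $\fp \in \cP_{i,\delta,\alpha^\ast}$.

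For the core step, observe that for all but finitely many primes $\fp$ of $K$, the map $\psi_i$ has good reduction and the multiplier $\lambda := \psi_i'(\alpha^\ast)$ (assumed nonzero; the superattracting case is handled separately) is a $\fp$-adic unit. Passing to the iterate $\Psi := \psi_i^e$, where $e$ is the order of $\lambda$ modulo $\fp$, arranges that the multiplier of $\Psi$ at $\alpha^\ast$ is $\equiv 1 \pmod{\fp}$. Under these conditions, the $\fp$-adic analytic-continuation machinery of Bell--Ghioca--Tucker (after Rivera-Letelier) interpolates the orbit $k \mapsto \Psi^k(\delta')$ by a $\fp$-adic analytic function $f \colon \bZ_p \to \bP^1(K_\fp)$, where $\delta'$ is either a point on the forward $\psi_i$-orbit of $\delta$ lying in the linearizing disk around $\alpha^\ast$, or the orbit never enters this disk and we are done trivially. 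Since $\alpha^\ast$ is not in the $K$-orbit of $\delta$, the function $f$ is not identically $\alpha^\ast$, so in a local coordinate $f - \alpha^\ast$ has only finitely many zeros in the compact disk $\bZ_p$; combined with the fact that $\Psi$ acts as a $\fp$-adic isometry on its linearizing disk, this yields $f(k) \not\equiv \alpha^\ast \pmod{\fp}$ for all sufficiently large $k$. The resulting $\cP_{i,\delta,\alpha^\ast}$ is cofinite in the set of primes of $K$, so the intersection $\cP := \bigcap \cP_{i,\delta,\alpha^\ast}$ over the finitely many triples still has positive (indeed full) density, and we take $M := \max M_{i,\delta,\alpha^\ast}$.

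The main obstacle I anticipate is the case analysis required to deal with the various $\fp$-adic behaviours of $\psi_i$ at $\alpha^\ast$: if the reduced multiplier is attracting, or if it reduces to a root of unity in such a way that the naive analytic continuation fails, one must arrange the iterate $\Psi$ and the linearizing disk carefully so that the $\Psi$-invariant disk around $\alpha^\ast$ \emph{misses} the residue class that the orbit of $\delta$ eventually occupies, rather than drawing $\delta$ in and forcing an eventual congruence with $\alpha^\ast$. Assembling these conditions uniformly for all the finitely many triples, and separately handling the superattracting case (where $\lambda = 0$ in $K$) via a direct reduction argument, is the technical heart of the proof; it is also the step where the hypothesis that each $\gamma$ is not $\varphi_i$-preperiodic is used in its strongest form.
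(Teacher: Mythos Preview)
Your reduction to periodic (indeed fixed) targets matches the paper's. The core step, however, rests on a conflation of two distinct statements: that $\Psi^k(\delta')$ \emph{equals} $\alpha^\ast$ in $K_{\fp}$ for only finitely many $k$, versus that $\Psi^k(\delta')$ is \emph{congruent} to $\alpha^\ast$ modulo $\fp$ for only finitely many $k$. The $\fp$-adic analytic interpolation addresses only the former; the theorem is about the latter.

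Concretely: in the branch of your dichotomy where the $\psi_i$-orbit of $\delta$ enters the residue disk of $\alpha^\ast$ at some $\delta'$, good reduction together with $\psi_i(\alpha^\ast)=\alpha^\ast$ forces that disk to be forward-invariant, so $\Psi^k(\delta')\equiv\alpha^\ast\pmod{\fp}$ for \emph{every} $k\geq 0$. That $f-\alpha^\ast$ has only finitely many zeros on $\bZ_p$ tells you $f(k)\neq\alpha^\ast$ for cofinitely many $k$; it says nothing about $|f(k)-\alpha^\ast|_{\fp}\geq 1$. So this branch yields the opposite of what you want, and the only usable branch is the one where the orbit never enters the disk --- for which no interpolation is needed. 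What you flag in your final paragraph as a technical obstacle (arranging that the invariant disk around $\alpha^\ast$ misses the residue class the orbit eventually occupies) is therefore not a side condition but the entire content of the theorem.

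In particular, your claim that each $\cP_{i,\delta,\alpha^\ast}$ is cofinite is false. For $\varphi(x)=x^2-1$, $\alpha^\ast=0$ (period $2$), and $\gamma=2$, every prime $p$ dividing a term of the integer orbit $2,3,8,63,\ldots$ is bad: modulo such $p$ the orbit falls into the $2$-cycle $\{0,-1\}$ and hits $0$ for infinitely many $m$. There are infinitely many such primes, and for suitable maps the bad set can even have positive density. The paper obtains the positive-density good set not by $\fp$-adic dynamics at each prime separately, but by locating one auxiliary prime $\fp$ at which (for suitable $M$) the equations $\varphi_i^M(x)\equiv\alpha\pmod{\fp}$ have no solutions over the residue field, and then invoking the Chebotarev density theorem to transport this Frobenius condition to a positive-density set of primes.
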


Theorem~\ref{nicer main} is a special case of our
main result, Theorem~\ref{proportion}, which features a weaker hypothesis:
one of the sets $\cA_i$ is allowed to contain a single non-preperiodic point.
We will prove Theorem~\ref{proportion}, and hence also
Theorem~\ref{nicer main}, in Section~\ref{main}.  The
technique is to find a prime $\fp$ at which, for each $i=1,\ldots, g$,
the expression $\varphi_i^M(x) - \alpha$ does not have a root modulo
$\fp$ for any $\alpha \in \cA_i$.  One then applies the Chebotarev
density theorem to obtain a positive density set of primes with the
desired property.

Theorem~\ref{proportion} has a number of applications to
arithmetic dynamics and to elliptic curves.  We present
two such corollaries here.  The first involves the
notion of good reduction of a rational function;
see Definition~\ref{good reduction}.

\begin{cor}
\label{positive non periodic}
Let $K$ be a number field, let $\varphi :\bP^1_K\lra \bP^1_K$ be a rational
function of degree at least~2,
and let $\alpha\in \bP^1(K)$ be a non-periodic
point of $\varphi$. Then there is a positive density set of primes
$\fp$ of $K$ at which $\varphi$ has good reduction $\varphi_{\fp}$
and such that the reduction
of $\alpha$ modulo $\fp$ is not $\varphi_{\fp}$-periodic.
\end{cor}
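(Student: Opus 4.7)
The plan is to split into two cases based on whether $\alpha$ is $\varphi$-preperiodic; the non-preperiodic case is where the real work happens, and I would handle it by invoking Theorem~\ref{proportion} with $g=1$, $\varphi_1=\varphi$, and $\cT_1 = \cA_1 = \{\alpha\}$.

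Suppose first that $\alpha$ is $\varphi$-preperiodic. Then its forward orbit is a finite set $\{\alpha = \beta_0, \beta_1, \ldots, \beta_{N-1}\} \subset \bP^1(K)$, and since $\alpha$ is not periodic, $\beta_j \neq \alpha$ for every $j \geq 1$. For all but finitely many primes $\fp$ of good reduction, the reductions satisfy $\beta_j \not\equiv \alpha \pmod{\fp}$ for each such $j$, and since $\varphi^m(\alpha) \in \{\beta_1, \ldots, \beta_{N-1}\}$ for every $m \geq 1$, the reduction $\alpha \bmod \fp$ cannot be $\varphi_\fp$-periodic. This already yields a density-one set of admissible primes in this case.

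Now suppose $\alpha$ is not $\varphi$-preperiodic. I would apply Theorem~\ref{proportion} with $g=1$, $\varphi_1 = \varphi$, $\cT_1 = \{\alpha\}$ (allowed since $\alpha$ is not preperiodic), and $\cA_1 = \{\alpha\}$ (permitted by the weaker hypothesis of Theorem~\ref{proportion}, quoted in the excerpt, that one $\cA_i$ may contain a single non-preperiodic point). This yields an integer $M \geq 1$ and a positive-density set $\cP$ of primes of $K$ such that
$$ \varphi^m(\alpha) \not\equiv \alpha \pmod{\fp} \quad \text{for all } \fp \in \cP \text{ and all } m \geq M. $$
Intersecting $\cP$ with the cofinite set of primes of good reduction of $\varphi$ preserves positive density; call the resulting set $\cP'$.

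It remains to verify that $\alpha \bmod \fp$ is $\varphi_\fp$-non-periodic for every $\fp \in \cP'$. If instead $\alpha \bmod \fp$ had exact period $d \geq 1$ under $\varphi_\fp$, then $\varphi^{kd}(\alpha) \equiv \alpha \pmod{\fp}$ for every $k \geq 0$, and any $k$ with $kd \geq M$ would contradict the display above. The main and essentially only non-routine step is setting up Theorem~\ref{proportion} with a non-preperiodic point in $\cA_1$, which forces the use of the relaxed hypothesis of that theorem; the remaining arguments are a routine case split and the elementary observation that a periodic iteration returns to itself arbitrarily late.
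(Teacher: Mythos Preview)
Your proposal is correct and follows essentially the same approach as the paper's own proof: the same case split into preperiodic versus non-preperiodic $\alpha$, the same cofinite-primes argument in the preperiodic case, and the same invocation of Theorem~\ref{proportion} with $g=1$ and $\cT_1=\cA_1=\{\alpha\}$ in the non-preperiodic case. Your write-up is in fact slightly more explicit than the paper's in spelling out why the conclusion $\varphi^m(\alpha)\not\equiv\alpha\pmod{\fp}$ for $m\geq M$ rules out periodicity modulo~$\fp$.
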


To state our second corollary,
we fix some notation.  If $\fp$ is a prime of a number field $K$,
$E$ is an elliptic curve defined over $K$ of good reduction
at $\fp$, and $Q\in E(K)$ is a $K$-rational point on $E$,
then $k_{\fp}=\fo_K/\fp$ is the residue field at $\fp$,
$E_{\fp}$ is the reduction of $E$ modulo $\fp$,
and $Q_{\fp}\in E_{\fp}(k_{\fp})$ is the reduction of $Q$ modulo $\fp$.

\begin{cor}
\label{positive non cyclic}
Let $K$ be a number field, let $E$ be an elliptic curve defined over $K$,
let $Q\in E(K)$ be a non-torsion point, let $q$ be a prime number,
and let $n$ be a positive integer.
Then there is a positive density set of primes $\fp$ of $K$ at
which $E$ has good reduction
and such that the order of $Q_{\fp}$ in the finite
group $E_{\fp}(k_{\fp})$ is divisible by $q^n$.
\end{cor}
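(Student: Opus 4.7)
The plan is to reduce the corollary to Corollary~\ref{positive non periodic} applied to a Latt\`es map on $\bP^1$, after first reducing to $n=1$. Set $Q' := [q^{n-1}]Q \in E(K)$, which is non-torsion since $Q$ is. At any prime $\fp$ of good reduction for $E$, writing $d := \order(Q_\fp)$ we have $\order(Q'_\fp) = d/\gcd(d, q^{n-1})$, so
$$
v_q\bigl(\order(Q'_\fp)\bigr) \;=\; \max\bigl(v_q(d) - (n-1),\; 0\bigr),
$$
giving $q^n \mid d$ if and only if $q \mid \order(Q'_\fp)$. It therefore suffices to prove the statement for $n=1$, after replacing $Q$ by $Q'$.

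Let $x: E \to \bP^1_K$ denote the $x$-coordinate projection; since $[q]$ commutes with $[-1]$ on $E$, it descends to a Latt\`es map $\psi = \psi_q: \bP^1_K \to \bP^1_K$ of degree $q^2$ satisfying $\psi \circ x = x \circ [q]$. Set $\alpha := x(Q') \in \bP^1(K)$. The key dictionary, valid at any prime $\fp$ of good reduction for both $E$ and $\psi$ (a cofinite set, where we further exclude residue characteristic $2$ to be safe), is
$$
q \mid \order(Q'_\fp) \;\Longleftrightarrow\; \alpha_\fp \text{ is not } \psi_\fp\text{-periodic}.
$$
Indeed, $\psi_\fp^m(\alpha_\fp) = \alpha_\fp$ forces $[q^m]Q'_\fp = \pm Q'_\fp$ in $E_\fp$, i.e.\ $d' \mid q^m \mp 1$ for $d' := \order(Q'_\fp)$. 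If $q \mid d'$ this demands $q \mid 1$, impossible; if $q \nmid d'$, then $q$ is a unit mod $d'$, some $q^m \equiv 1 \pmod{d'}$ exists, and periodicity follows. The same reasoning applied over $K$ shows that $\alpha$ is not $\psi$-periodic, since $Q'$ is non-torsion.

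Applying Corollary~\ref{positive non periodic} to $\psi$ and $\alpha$ produces a positive density set of primes $\fp$ at which $\psi$ has good reduction and $\alpha_\fp$ is not $\psi_\fp$-periodic; intersecting with the cofinite set of good-reduction primes for $E$ (and removing the finitely many exceptional primes above) preserves positive density, and the dictionary then yields $q^n \mid \order(Q_\fp)$ at each such $\fp$. The main obstacle is the dynamical dictionary itself: matching $\psi_\fp$-periodicity on $\bP^1$ with divisibility of $\order(Q'_\fp)$ on $E$ requires careful bookkeeping of the $\pm$ sign coming from the two-to-one quotient $E \to E/\langle[-1]\rangle \cong \bP^1_K$, along with verifying that the Latt\`es reduction commutes with mod-$\fp$ reduction at the primes in question.
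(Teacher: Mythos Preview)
Your proof is correct and follows essentially the same route as the paper: both arguments apply Corollary~\ref{positive non periodic} to the Latt\`es map associated to $[q]$ at the point $\alpha = x([q^{n-1}]Q)$, and then translate non-periodicity of $\alpha_{\fp}$ into the divisibility condition $q^n \mid \order(Q_{\fp})$. Your preliminary reduction to $n=1$ is a cosmetic reorganization of the same computation; in fact your explicit handling of the $\pm$ sign arising from the two-to-one cover $x:E\to\bP^1$ is slightly more careful than the paper's somewhat elided ``Equivalently'' step.
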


Corollary~\ref{positive non cyclic} is in fact a weak version of a theorem
of Pink, who showed the following result in
\cite[Corollary 4.3]{Pink-abelian}: given an abelian
variety $A$ over a number field $K$, a point $Q\in A(K)$ such
that $\bZ\cdot Q$ is Zariski dense in $A$, and any prime power
$q^n$, there is a positive density set of primes $\fp$ of $K$ such
that the $q$-primary part of the order of $Q_{\fp}\in
A_{\fp}(k_{\fp})$ equals $q^n$.

This project originated in the summer of 2009, when four
of the authors (R.B., D.G., P.K., and T.T.)
were working to extend their results from
\cite{DML} to other cases of the Dynamical Mordell-Lang Conjecture. At
that time, T.S. suggested the general strategy for such an
extension. The details for the proposed strategy turned out to be more
complicated than originally thought, and thus later, with the help of
B.H., particularly with respect to the computations
in Section~\ref{counter}, the project was finalized.

The outline of the paper is as follows.
After some background in Section~\ref{terminology},
we state and prove Theorem~\ref{proportion}
in Section~\ref{main}.
In Section~\ref{apply}, we prove the two above corollaries,
and we present other applications
of Theorem~\ref{proportion} to some recent problems
in arithmetic dynamics.
In Section~\ref{counter} we present
evidence that a result like Theorem~\ref{proportion}
is unlikely to hold in higher dimensions.
We conclude by posing
some related questions in Section~\ref{questions}.

\section{Notation and terminology}
\label{terminology}


Let $K$ be a number field with algebraic closure $\Kbar$
and ring $\fo_K$ of algebraic integers.
Fix an isomorphism $\pi$ from $\bP^1_K$ to the
generic fibre of $\bP^1_{\fo_K}$.
By standard abuse of language,
we call a nonzero prime $\fp$ of $\fo_K$ simply a \emph{prime of $K$},
and we denote the corresponding residue field $k_{\fp}:=\fo_K/\fp$.
For each
prime $\fp$ of $K$, and for each $x\in\bP^1(K)$,
we denote by $r_{\fp}(x)$ the intersection of the Zariski closure of
$\pi(x)$ with the fibre above $\fp$ of $\bP^1_{\fo_K}$;
intuitively, $r_{\fp}(x)$ is $x$ modulo $\fp$.
The resulting map $r_{\fp}:\bP^1(K)\lra\bP^1(k_{\fp})$ is the
\emph{reduction map} at $\fp$.
Again by standard abuse of language,
we say that $\alpha\in\bP^1(K)$ is \emph{congruent} to
$\beta\in\bP^1(K)$ modulo $\fp$, and we write
$\alpha \equiv \beta \pmod \fp$,
if $r_\fp(\alpha) = r_\fp(\beta)$.

If $\varphi:\bP^1\to\bP^1$
is a morphism defined over the field $K$,
then (fixing a choice of homogeneous coordinates)
there are relatively prime homogeneous polynomials $F,G\in K[X,Y]$
of the same degree $d=\deg\varphi$ such that
$\varphi([X,Y])=[F(X,Y):G(X,Y)]$; note
that $F$ and $G$ are uniquely defined up to a nonzero constant
multiple.
(In affine coordinates, $\varphi(t)=F(t,1)/G(t,1)\in K(t)$
is a rational function in one variable.)
We can then define
the following notion of good reduction of $\varphi$,
first introduced by Morton and Silverman in~\cite{MorSil1}.

\begin{definition}
\label{good reduction}
Let $K$ be a number field,
let $\fp$ be a prime of $K$,
and let $\fo_{\fp}\subseteq K$ be the corresponding
local ring of integers.
Let
$\varphi:\bP^1\lra\bP^1$ be a morphism over $K$,
given by $\varphi([X,Y])=[F(X,Y):G(X,Y)]$, where
$F,G\in \fo_{\fp}[X,Y]$ are relatively prime homogeneous
polynomials of the same degree
such that at least one coefficient
of $F$ or $G$ is a unit in $\fO_{\fp}$.
Let $\varphi_{\fp}:=[F_{\fp},G_{\fp}]$, where
$F_{\fp},G_{\fp}\in k_{\fp}[X,Y]$ are the reductions
of $F$ and $G$ modulo $\fp$.
We say that $\varphi$ has
{\em good reduction} at $\fp$ if
$\varphi_{\fp}:\bP^1(k_{\fp})\lra\bP^1(k_{\fp})$ is a morphism of the same
degree as $\varphi$.
\end{definition}
Intuitively, the map $\varphi_{\fp}:\bP^1(k_{\fp})\to\bP^1(k_{\fp})$
in Definition~\ref{good reduction}
is the reduction of $\varphi$ modulo $\fp$.
If $\varphi\in K[t]$ is a polynomial, there is
an elementary criterion for good reduction:
$\varphi$ has good reduction at $\fp$ if and only if
all coefficients of $\varphi$ are
$\fp$-adic integers, and its leading coefficient is a $\fp$-adic unit.


We will use the following definition
in Section~\ref{apply}.
\begin{definition}
\label{def:per}
Let $K$ be a field, let $\varphi\in K(t)$ be a rational function,
and let $z\in\bP^1(\Kbar)$ be $\varphi$-periodic of period $n\geq 1$.
Then $\lambda:=(\varphi^n)'(z)$ is called the {\em multiplier} of $z$.
If $\fp$ is a prime of $K$ with associated absolute value $|\cdot |_{\fp}$,
and if $|\lambda|_{\fp} < 1$,
then $z$ is said to be {\em attracting} with respect to the prime $\fp$.
\end{definition}

In Sections~\ref{counter} and~\ref{questions} we will consider morphisms
of higher-dimensional spaces.  Therefore we note
that the multiplier $\lambda$ in Definition~\ref{def:per}
is the unique number $\lambda$ such that the induced map
$d ( \varphi^{n}):T_z {\mathbb P}^1 \lra T_z {\mathbb P}^1$
on the tangent space at $z$ is multiplication by the $1\times 1$ matrix
$[\lambda]$.

Multipliers are invariant under coordinate change.
More precisely,
if $z$ is a $\varphi$-periodic point
and $\varphi=\mu^{-1}\circ\psi\circ\mu$,
then $\mu(z)$ is a $\psi$-periodic point, and by the chain rule,
it has the same multiplier as $z$ does.
In particular, we can define the multiplier of a periodic point
at $z=\infty$ by changing coordinates.
Also by the chain rule,
the multiplier of $\varphi^{\ell}(z)$
is the same as that of $z$.

Whether or not $z$ is periodic,
we say $z$ is a {\em ramification point} or {\em critical point}
of $\varphi$ if $\varphi'(z)=0$.  If
$\varphi=\mu^{-1}\circ\psi\circ\mu$, then $z$ is a critical point
of $\varphi$ if and only if $\mu(z)$ is a critical point of $\psi$;
in particular, coordinate change can again be used to determine
whether $z=\infty$ is a critical point.

We conclude this Section by recalling the statement of the
Chebotarev Density Theorem; for more details,
see, for example, \cite{Lenstra}.

\begin{theorem}
\label{Chebotarev}
Let $L/K$ be a Galois extension of number fields, and let $G:=\Gal(L/K)$.
Let $C \subset G$ be closed under conjugation, and define
$$\Pi_C(x,L/K): =
\# \{\fp \, : \, N(\fp)\leq x, \; \fp \text{ is unramified in } L/K,
\text{ and } \sigma_{\fp} \subseteq C\},$$
where $N(\fp)$ is the $(K/\mathbb{Q})$-norm
of the prime ideal $\fp$ of $K$,
and $\sigma_{\fp}$ is the Frobenius conjugacy class
corresponding to $\fp$ in $\Gal(L/K)$. Then
$$\lim_{x\to\infty}\frac{\Pi_C(x,L/K)}{\Pi_G(x,L/K)}=
\frac{|C|}{|G|}.$$
\end{theorem}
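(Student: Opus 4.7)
The plan is to establish the density formula first in the abelian case using analytic properties of Hecke $L$-functions, and then reduce the general Galois case to the abelian case via Artin's inductive trick with cyclic subgroups.

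For the abelian step, assume $G = \Gal(L/K)$ is abelian and fix an element $\sigma \in G$. Character orthogonality expresses the indicator of $\{\sigma\}$ as $\frac{1}{|G|}\sum_{\chi}\overline{\chi(\sigma)}\chi$, where the sum runs over characters of $G$, which by class field theory may be identified with Hecke characters of $K$ (of modulus dividing the conductor of $L/K$). Summing over unramified primes $\fp$ with $N\fp \le x$ and using the Euler product expansion of the associated Hecke $L$-functions $L(s,\chi)$, the relevant generating series behaves, up to a bounded remainder, like
\[
\frac{1}{|G|}\sum_{\chi}\overline{\chi(\sigma)}\log L(s,\chi)
\quad \text{as } s \to 1^{+}.
\]
The trivial character contributes the simple pole of $\zeta_K$ at $s = 1$, while the remaining contributions are bounded provided $L(1,\chi) \neq 0$ for each nontrivial $\chi$. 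A Wiener--Ikehara (or Landau) Tauberian argument then extracts the natural density $1/|G|$ of primes $\fp$ with $\sigma_\fp = \sigma$.

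For the reduction to the abelian case, fix $\sigma \in G$ and let $H = \langle \sigma \rangle$, so that $L/L^H$ is cyclic and hence abelian. The abelian step applied to $L/L^H$ gives density $1/|H|$ for the primes $\mathfrak{q}$ of $L^H$ whose Frobenius in $L/L^H$ equals $\sigma$. Such a prime $\mathfrak{q}$ contracts to a prime $\fp$ of $K$ whose Frobenius conjugacy class in $G$ contains $\sigma$, and the count of $\mathfrak{q}$ above a given such $\fp$ can be read off from the decomposition group. A M\"obius inversion over subgroups of $H$ then isolates the contribution of elements of exact order $|\sigma|$ in the conjugacy class of $\sigma$, and averaging over the orbit $C$ under conjugation yields the stated density $|C|/|G|$.

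The main obstacle is the analytic input $L(1,\chi) \neq 0$ for every nontrivial Hecke character $\chi$ of $K$. This can be obtained from the Brauer--Aramata factorization
\[
\zeta_L(s) = \zeta_K(s) \prod_{\chi \neq 1} L(s,\chi),
\]
combined with the simple pole of $\zeta_L$ at $s = 1$ and the nonnegativity of the logarithmic coefficients, or equivalently from the analytic class number formula applied to $L$. Once this nonvanishing is in hand, everything else in the argument (Artin reduction, character orthogonality, Tauberian extraction of the density) is formal.
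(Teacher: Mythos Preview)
The paper does not prove this theorem at all: Theorem~\ref{Chebotarev} is merely recalled as a classical result, with a reference to \cite{Lenstra} for details. There is therefore no ``paper's own proof'' to compare your proposal against.

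As for your sketch itself, the overall architecture (abelian case via Hecke $L$-functions and nonvanishing at $s=1$, then Deuring's reduction through the cyclic subgroup $H=\langle\sigma\rangle$ and the intermediate field $L^{H}$) is indeed the classical route. The abelian step is fine. The reduction step, however, is stated imprecisely. The phrase ``M\"obius inversion over subgroups of $H$ isolates the contribution of elements of exact order $|\sigma|$'' does not correspond to what is actually needed: $\sigma$ is a fixed element of fixed order, and no such inversion arises. What one really does is count, for each unramified prime $\fp$ of $K$ with $\sigma\in\sigma_{\fp}$, the primes $\fq$ of $L^{H}$ above $\fp$ that have residue degree~$1$ over $\fp$ and Frobenius equal to $\sigma$ in $\Gal(L/L^{H})$; this count is $[G:H]\cdot|C|^{-1}$ via the orbit--stabilizer relation for the conjugation action (equivalently, via the centralizer $C_{G}(\sigma)$). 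The primes $\fq$ of degree $>1$ over $K$ contribute only to the error term. Comparing the two counts gives the density $|C|/|G|$ directly, without any M\"obius step.
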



\section{Proof of main result}\label{main}

We now state and prove our promised generalization of Theorem~\ref{nicer main}.

\begin{theorem}\label{proportion}
  Let $K$ be a number field,
  and let $\varphi_1, \dots, \varphi_g :\bP^1_K\lra \bP^1_K$
  be rational maps of degree at least~2.
  Let $\cA_1, \dots, \cA_g$ be finite subsets of $\bP^1(K)$ such that
  at most one set $\cA_i$ contains a point
  that is not $\varphi_i$-preperiodic, and such that there is
  at most one such point in that set $\cA_i$.
  Let $\cT_1, \dots, \cT_g$ be finite subsets of $\bP^1(K)$ such that
  no $\cT_i$ contains any $\varphi_i$-preperiodic points.
  Then there is a positive integer $M$ and a set of primes $\cP$ of $K$
  having positive density such that
  for any $i=1,\ldots, g$,
  any $\gamma \in \cT_i$, any $\alpha \in \cA_i$,
  any $\fp\in\cP$, and any $m\geq M$,
  $$ \varphi_i^m(\gamma) \not\equiv \alpha \pmod {\fp}.$$
\end{theorem}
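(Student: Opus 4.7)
The plan is to reformulate the infinite family of conditions ``$\varphi_i^m(\gamma)\not\equiv\alpha\pmod{\fp}$ for all $m\geq M$'' as a single Galois-theoretic condition on $\fp$, amenable to the Chebotarev density theorem. The key \emph{substitution observation} is this: if $\fp$ is a prime of good reduction for $\varphi_i$ at which the rational function $\varphi_i^M(x)-\alpha$ has no zero in $\bP^1(k_\fp)$, then $\varphi_i^m(\gamma)\not\equiv\alpha\pmod{\fp}$ for every $\gamma\in\bP^1(K)$ and every $m\geq M$; otherwise the reduction $r_\fp(\varphi_i^{m-M}(\gamma))\in\bP^1(k_\fp)$ would be such a zero. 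Thus it suffices to produce one $M$ and a positive-density set of primes $\fp$ at which $\varphi_i^M(x)-\alpha$ has no $k_\fp$-rational zero for every $i$ and every $\alpha\in\cA_i$.

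Several preparatory reductions set up the Chebotarev step. First, if $\alpha\in\cA_i$ is preperiodic but not periodic, replace it by its first periodic iterate $\alpha^*:=\varphi_i^{M_0}(\alpha)$; since $\varphi_i^m(\gamma)\equiv\alpha\pmod{\fp}$ forces $\varphi_i^{m+M_0}(\gamma)\equiv\alpha^*\pmod{\fp}$, we may assume each $\alpha$ is periodic, the threshold $M$ absorbing $M_0$. Second, choose $M$ large and not divisible by the period of any non-fixed $\alpha^*\in\cA_i$, so $\alpha^*$ is not itself a root of $\varphi_i^M(x)-\alpha^*$. The remaining $K$-rational roots are preperiodic $K$-rational preimages of $\alpha$; by Northcott's theorem these form a finite set, and since each $\gamma\in\cT_i$ is non-preperiodic, we discard the finite set of primes at which $\gamma$ coincidentally reduces to one of them. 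Now let $L$ be the splitting field over $K$ of $\prod_{i,\alpha\in\cA_i}(\varphi_i^M(x)-\alpha)$ and set $G:=\Gal(L/K)$. For $\fp$ unramified in $L/K$, having no relevant root in $k_\fp$ translates into the Frobenius class $\sigma_\fp$ acting as a simultaneous derangement on the union of non-$K$-rational root sets. By the preceding reductions, each such irreducible factor over $K$ has degree at least $2$, so Jordan's theorem supplies a derangement in each transitive Galois factor; a Goursat-style amalgamation combines these across the finite list of $(i,\alpha)$ pairs into a conjugation-closed derangement class of positive proportion in $G$, and Theorem~\ref{Chebotarev} then delivers the required positive-density set of primes.

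The principal obstacles are Galois-combinatorial. Producing a \emph{simultaneous} derangement across all $(i,\alpha)$ pairs requires careful control of the dependencies among the splitting fields $K(\varphi_i^{-M}(\alpha))$, handled via Goursat's lemma; this is the most technical piece of the argument. The lone permitted non-preperiodic point in some $\cA_i$ needs an additional input from the theory of arboreal Galois representations: its infinite backward $\varphi_i$-orbit makes the Galois group on $\varphi_i^{-M}(\alpha)$ rich enough, for $M$ sufficiently large, to supply the needed derangement---this is where the hypothesis ``at most one such point in at most one $\cA_i$'' becomes essential, since additional non-preperiodic points would impose potentially incompatible Galois constraints. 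Finally, fixed points of $\varphi_i$ are a subtle edge case (being unavoidable roots of $\varphi_i^M(x)-\alpha$), and require a supplementary local-dynamical argument tracking $\gamma$'s orbit modulo $\fp$ near the fixed point (in the spirit of a Kummer-type Chebotarev condition on the multiplicative order of a local parameter at $\alpha$).
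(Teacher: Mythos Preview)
Your substitution observation is correct and is also the paper's starting point, but the execution you sketch has a genuine gap that your proposed fixes do not close.

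The problem is your handling of $K$-rational roots. When $\alpha\in\cA_i$ is periodic of period $\ell$, the equation $\varphi_i^M(x)=\alpha$ \emph{always} has a $K$-rational solution $\beta$: namely the unique point of the cycle of $\alpha$ with $\varphi_i^M(\beta)=\alpha$. Your divisibility condition on $M$ only arranges $\beta\neq\alpha$, not $\beta\notin K$; since the entire cycle of a $K$-rational periodic point lies in $\bP^1(K)$, this root is unavoidable. Hence for \emph{every} prime $\fp$ the reduction of $\beta$ is a $k_\fp$-rational zero, and no Chebotarev/derangement condition can make $\varphi_i^M(x)-\alpha$ root-free over $k_\fp$. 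Your patch---discarding the finitely many primes at which $\gamma$ itself reduces to such a $\beta$---does not suffice: by your own substitution logic, what you actually need is $\varphi_i^k(\gamma)\not\equiv\beta\pmod{\fp}$ for \emph{all} $k\geq 0$, which is exactly the shape of the original theorem with $\beta$ in place of $\alpha$. So the argument is circular, and the ``subtle edge case'' you postpone to a supplementary Kummer-type remark is in fact the entire difficulty for every preperiodic target.

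The paper breaks this circularity by a different mechanism. After reducing (as you do) so that every preperiodic $\alpha\in\cA_i$ is fixed, it first locates a \emph{single} auxiliary prime $\fp_0$ at which (a) the lone non-preperiodic $\alpha'$ is not periodic modulo $\fp_0$---obtained via \cite[Lemma~4.3]{DML}, a Roth-theorem consequence, not arboreal Galois input---and (b) the ramification index of $\varphi_i$ at each fixed $\alpha$ is unchanged modulo $\fp_0$. Condition~(b) guarantees that every non-fixed preimage $\gamma_{jk}\in\varphi_i^{-1}(\alpha)\setminus\{\alpha\}$ is also aperiodic modulo $\fp_0$. Working over a suitable finite extension $L$, Proposition~\ref{prop} then shows that for $M$ large, every $\beta$ with $\varphi_i^M(\beta)\in\cA_i$ but $\varphi_i^t(\beta)\notin\cA_i$ for $t<M$ generates an extension of $L$ that is unramified of residue degree $>1$ at the prime above $\fp_0$. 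Thus the Frobenius there is \emph{automatically} a simultaneous derangement on all the relevant root sets; no Jordan/Goursat amalgamation is needed, and Chebotarev transports this single Frobenius class to a positive-density set. The existence of the auxiliary prime $\fp_0$ is the key idea your proposal lacks.
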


We will need the following standard ramification lemma
over $\fp$-adic fields; it says, roughly, that if the field of
definition of a point in $\varphi^{-m}(\alpha)$ ramifies at $\tfp$,
then that point must be a ramification point of $\varphi^m$ modulo
$\tfp$.

\begin{lemma}
\label{ramlem}
  Let $K$ be a number field, let $\tfp$ be a prime of $\fo_{\bK}$, and let
  $\varphi: \bP^1 \lra \bP^1$ be a rational function
  defined over $K$ and of good reduction at $\fp = \tfp\cap\fo_K$
  such that $2\leq \deg\varphi < \car k_{\fp}$.
Let $\alpha\in\bP^1(K)$, let $m\geq 1$ be an integer, let
$\beta\in\varphi^{-m}(\alpha)\subseteq \bP^1(\overline{K})$,
and let $\fq:=\tfp\cap\fo_{K(\beta)}$.
If $\fq$ is ramified over $\fp$, then
$\beta$ is congruent modulo $\tfp$ to a ramification point of $\varphi^m$.
\end{lemma}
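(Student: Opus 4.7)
My plan is to prove the contrapositive by induction on $m$: assuming $\bar\beta$ is not congruent mod $\tilde\fp$ to any critical point of $\varphi^m$, I will show that $\fq$ is unramified over $\fp$. The overall strategy is to exhibit $\beta$ as a simple root, modulo the relevant prime, of an integral polynomial, and then apply Hensel's lemma to lift the root into an unramified extension.

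For the base case $m=1$, first apply $\fo_\fp$-integral M\"obius transformations on source and target (which preserve good reduction) to arrange $\bar\alpha, \bar\beta \ne \infty$. Using the good-reduction representation $\varphi = F/G$ with $F, G \in \fo_\fp[X]$ after dehomogenizing, set
\[
h(X) := F(X) - \alpha\,G(X) \in \fo_\fp[X],
\]
so that $h(\beta) = 0$. The quotient rule together with $\bar\alpha = \bar F(\bar\beta)/\bar G(\bar\beta)$ gives $\bar h'(\bar\beta) = \bar G(\bar\beta)\cdot\bar\varphi'(\bar\beta)$, and the first factor is nonzero since $\bar\alpha\ne\infty$. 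Under the hypothesis $d < \car k_\fp$, the map $\bar\varphi$ is separable and tamely ramified, and Riemann--Hurwitz counts $2d-2$ critical points with multiplicity on both sides, so reduction induces a bijection between the critical points of $\varphi$ and of $\bar\varphi$. Hence the hypothesis forces $\bar\varphi'(\bar\beta)\ne 0$, so $\bar\beta$ is a simple root of $\bar h$. Hensel's lemma then supplies a root of $h$ in $K_\fp^{\mathrm{unr}}$ reducing to $\bar\beta$; by uniqueness this root is $\beta$, and therefore $\fq/\fp$ is unramified.

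For the inductive step, set $\gamma := \varphi(\beta) \in \varphi^{-(m-1)}(\alpha)$ and $\fq_1 := \tilde\fp \cap \fo_{K(\gamma)}$. The chain rule shows that every critical point of $\varphi$ is also critical for $\varphi^m$, so $\bar\beta$ is not congruent to a critical point of $\varphi$; applying the base case to $\beta\in\varphi^{-1}(\gamma)$ over $K(\gamma)$ (where $\varphi$ still has good reduction) yields that $\fq/\fq_1$ is unramified. To invoke the inductive hypothesis at $\gamma$, I need $\bar\gamma$ not congruent to any critical point of $\varphi^{m-1}$. If instead $\bar\gamma \equiv c \pmod{\tilde\fp}$ for some such $c$, then since $\bar\varphi'(\bar\beta)\ne 0$, Hensel applied to $F(X) - c\,G(X)$ produces $\tilde c\in\varphi^{-1}(c)$ with $\bar{\tilde c} = \bar\beta$, and the chain rule shows that $\tilde c$ is critical for $\varphi^m$, contradicting the hypothesis. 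Thus the inductive hypothesis gives $\fq_1/\fp$ unramified, and multiplicativity of ramification indices closes the induction.

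The main obstacle I anticipate is the critical-point bijection invoked in the base case: justifying carefully that reduction mod $\tilde\fp$ restricts to a bijection between the critical points of $\varphi$ and those of $\bar\varphi$ is the sole place where the hypothesis $d < \car k_\fp$ enters, via tameness of $\bar\varphi$ and matching Riemann--Hurwitz counts. The remaining ingredients --- the quotient-rule identity, the chain rule, Hensel's lemma, and multiplicativity of ramification indices --- are standard.
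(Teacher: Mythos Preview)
Your proof is correct and follows essentially the same approach as the paper: both reduce to $m=1$ by induction and relate ramification of $\fq/\fp$ to the vanishing of $\bar\varphi'$ at $\bar\beta$, then lift critical points of $\bar\varphi$ to critical points of $\varphi$. The differences are presentational --- you argue the contrapositive via Hensel's lemma and spell out both the inductive step and the Riemann--Hurwitz count for the critical-point bijection, whereas the paper argues directly (ramified $\Rightarrow$ repeated root of $f$ at $\bar\beta$ $\Rightarrow$ $\bar\varphi'(\bar\beta)=0$ $\Rightarrow$ a root of $\varphi'$ reduces to $\bar\beta$) and leaves both the induction and the lifting of critical points as one-line assertions.
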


\begin{proof}
By induction, it suffices to show the lemma in the case $m=1$.
Let $|\cdot|_{\tfp}$ denote the $\tfp$-adic absolute value on $\bK$,
and let $K_{\fp}$ be the completion of $K$ with respect to
$|\cdot|_{\tfp}$.
After a change of coordinates, we may assume that $\alpha=0$
and that $|\beta|_{\tfp}\leq 1$.

Writing $\varphi=f/g$, where $f,g\in K[t]$ are relatively prime
polynomials, we have $f(\beta)=0$.
%
Since $\fq$ is ramified over $\fp$, $f$
must have at least one other
root congruent to $\beta$ modulo $\tfp$.
Thus, the reduction $f_{\fp}$ of $f$ has a multiple root at $\beta$.
However, $g_{\fp}(\beta)\neq 0$, since $\varphi$
has good reduction.
Therefore, the reduction $\varphi_{\fp}$ has
a multiple root at $\beta$, and hence $\varphi'_{\fp}(\beta)=0$.
On the other hand, because
$\deg\varphi < \car k_{\fp}$, there must be some $\gamma\in\fo_{\bK}$
such that $\varphi'_{\fp}(\gamma)\neq 0$.  It follows that
there is a root of $\varphi'$ congruent  to $\beta$ modulo $\tfp$.
\end{proof}

Next, we use the fact that our residue fields are finite to show that
if $\alpha$ is not periodic modulo a large enough prime $\fp$,
then for large $m$, there can be no roots of $\varphi^m(x) - \alpha$
modulo $\fp$.  We also obtain some extra information about our
fields of definition, which we will need in order to apply the
Chebotarev density theorem in our proof of Theorem~\ref{proportion}.

\begin{lemma}\label{lem}
  Let $K$ be a number field, let $\tfp$ be a prime of $\fo_{\bK}$, and let
  $\varphi: \bP^1 \lra \bP^1$ be a rational function
  defined over $K$ and of good reduction at $\fp = \tfp\cap\fo_K$
  such that $2\leq \deg\varphi < \car k_{\fp}$.
  Suppose that
  $\alpha\in\bP^1(K)$ is not periodic modulo $\fp$.
  Then there exists a finite extension $E$ of $K$
  with the following property:
  for any finite extension $L$ of $E$,
  there is an integer $M\in\bN$ such that for all  $m\geq M$ and all
  $\beta \in \bP^1(\bK)$ with $\varphi^m(\beta) = \alpha$,
  \begin{enumerate}
  \item $\fr$ does not ramify over $\fq$, and
\item $[\fo_{L(\beta)}/\fr : \fo_L/ \fq] > 1$,
\end{enumerate}
  where $\fr:=\tfp\cap\fo_{L(\beta)}$,
  and $\fq := \tfp\cap\fo_{L}$.
\end{lemma}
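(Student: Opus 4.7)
The plan is to handle (ii) by a finiteness argument in the residue field and (i) by an iterated application of Lemma~\ref{ramlem}. Let $C \subset \bP^1(\bK)$ denote the (finite) critical locus of $\varphi$ in characteristic zero. Because $\bar\alpha$ is not periodic under $\varphi_\fp$, for each $c \in C$ there is at most one integer $k_c \geq 0$ with $\varphi_\fp^{k_c}(\bar c) = \bar\alpha$; set $K_0$ to be the maximum over such $k_c$ (or $K_0 := 0$ if none exist), and take $E$ to be any finite extension of $K$ containing $\bigcup_{k=0}^{K_0} \varphi^{-k}(\alpha)$.

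For (ii), fix $L \supseteq E$ and put $A_m := \varphi_\fp^{-m}(\bar\alpha) \cap \bP^1(k_\fq)$. Any $\bar\beta \in \bP^1(k_\fq)$ has $\bar\alpha$ in its forward orbit under $\varphi_\fp$ at most once (by non-periodicity), so the $A_m$ are pairwise disjoint subsets of the finite set $\bP^1(k_\fq)$. Hence $A_m = \emptyset$ for all $m \geq M_2 = M_2(L)$.

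For (i), fix $m > K_0$ and $\beta \in \varphi^{-m}(\alpha)$, and suppose for contradiction that $\fr/\fq$ is ramified. Apply Lemma~\ref{ramlem} over $L$ with target $\alpha$: $\bar\beta = \bar z$ for some critical point $z$ of $\varphi^m$ in characteristic zero, so $\varphi^j(z) = c$ for some $c \in C$ and $0 \leq j < m$. Reducing modulo $\tfp$ gives $\varphi_\fp^{m-j}(\bar c) = \bar\alpha$, forcing $m - j = k_c$ by non-periodicity of $\bar\alpha$. Then $w := \varphi^{m-k_c}(\beta)$ lies in $\varphi^{-k_c}(\alpha) \subseteq L$ (by construction of $E$), and $\bar w = \bar c$. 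Now view $\beta$ as a preimage of $w \in L$ under the iterate $\varphi^{m-k_c}$, and reapply Lemma~\ref{ramlem} using the same ramification hypothesis: this yields a fresh $c' \in C$ with $k_{c'}$ defined and $k_{c'} > k_c$ (the strict increase comes from the bound $j' < m - k_c$ in the lemma). Iterating produces a strictly increasing sequence inside the finite set $\{k_c : c \in C\}$, a contradiction. Setting $M := \max(M_2(L), K_0+1)$ completes the proof.

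The main obstacle is organizing the iteration in (i). What makes the iteration go through is that the degree hypothesis in Lemma~\ref{ramlem} is imposed on $\varphi$ rather than on the iterate being inverted, so the lemma applies repeatedly with the same $\varphi$ and $\fp$; and the targets $w$ we produce always land in $L$, because $E$ was set up to contain $\varphi^{-k}(\alpha)$ for every $k \leq K_0$.
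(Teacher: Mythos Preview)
Your proof is correct and matches the paper's approach: build $E$ to contain enough low-height preimages of $\alpha$, handle (ii) by finiteness of the residue field, and handle (i) via Lemma~\ref{ramlem} applied with intermediate targets lying in $L$. Your iteration producing a strictly increasing sequence of $k_c$'s is just a repackaging of the paper's argument, which instead picks the \emph{minimal} $j$ with $\varphi^j(\beta)$ among the adjoined points and applies Lemma~\ref{ramlem} once with that target (the paper's $E$ is also slightly smaller, adjoining only those preimages congruent to a critical point modulo~$\tfp$).
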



\begin{proof}
For any $\gamma \in \fo_{\bK}$,
\begin{equation} \label{one}
\text{there is at most one $j\geq 0$
such that $\varphi^j(\gamma) \equiv \alpha \pmod{\tfp}$},
\end{equation}
since $\alpha$ is not periodic modulo $\fp$.
In particular,
for each ramification point $\gamma\in\bP^1(\bK)$ of $\varphi$,
there are only finitely many integers $n\geq 0$
and points $z\in \bP^1(\bK)$ such that
$\varphi^n(z)=\alpha$ and $z \equiv \gamma \pmod {\tfp}$.
Let $E$ be the finite extension of $K$ formed by adjoining
all such points $z$.

Given any finite extension $L$ of $E$,
let $\fq = \tfp\cap\fo_L$.
Since $\bP^1(\fo_L/\fq)$ is finite, \eqref{one} implies that for
all sufficiently large $M$, the equation $\varphi^M(x) = \alpha$
has no solutions in $\bP^1(\fo_L/\fq)$.
Fix any such $M$; note that $M$ must be larger than any
of the integers $n$ in the previous paragraph.
Hence, given $m\geq M$ and $\beta\in\bP^1(\bK)$ such that
$\varphi^m(\beta) = \alpha$, we must have
$[\fo_{L(\beta)}/\fr: \fo_L / \fq] > 1$,
where $\fr = \tfp\cap \fo_{L(\beta)}$,
proving conclusion~(ii).
Furthermore, if $\beta$
is a root of $\varphi^m(x) - \alpha$, then there are two possibilities:
either (1) $\beta$ is not congruent modulo $\tfp$ to a ramification point
of $\varphi^m$,
or (2) $\varphi^j(\beta) = z$ for some $j\geq 0$
and some point $z\in\bP^1(L)$ from the previous paragraph.
In case (1), $\fr$ is unramified over $\fq$ by Lemma~\ref{ramlem}.
In case (2), choosing a minimal such $j\geq 0$,
and applying Lemma~\ref{ramlem} with $z$ in the role of $\alpha$
and $j$ in the role of $m$,
$\fr$ is again unramified over $\fq$.
Thus, in either case, conclusion~(i) holds.
\end{proof}

We now apply Lemma~\ref{lem} to a set $\cA$ of points.

\begin{proposition}\label{prop}
  Let $K$ be a number field, let $\tfp$ be a prime of $\fo_{\bK}$, and let
  $\varphi: \bP^1 \lra \bP^1$ be a rational function
  defined over $K$ and of good reduction at $\fp = \tfp\cap\fo_K$
  such that $2\leq \deg\varphi < \car k_{\fp}$.
  Let $\cA = \{ \alpha_1, \dots, \alpha_n \}$ be a
  finite subset of $\bP^1(K)$ such that
  for each $\alpha_i \in \cA$,
\begin{itemize}
\item if $\alpha_i$ is not periodic, then $\alpha_i$ is not periodic
  modulo $\fp$; and
\item if $\alpha_i$ is periodic, then $\varphi(\alpha_i) = \alpha_i$ (i.e.,
  $\alpha_i$ is fixed by $\varphi$) and the ramification index of
  $\varphi$ at $\alpha_i$ is the same modulo $\fp$ as over $K$.
\end{itemize}
Then there is a finite extension $E$ of $K$
with the following property:
for any finite extension $L$ of $E$,
there is an integer $M\in \bN$ such that for all $m\geq M$
and all $\beta \in \bP^1(\bK)$ with $\varphi^m(\beta) \in \cA$
but $\varphi^t(\beta) \notin \cA$ for all $t < m$,
\begin{enumerate}
\item $\fr$ does not ramify over $\fq$, and
\item $[\fo_{L(\beta)}/\fr : \fo_L/ \fq] > 1$,
\end{enumerate}
where $\fr:=\tfp\cap\fo_{L(\beta)}$ and $\fq:=\tfp\cap\fo_L$.
\end{proposition}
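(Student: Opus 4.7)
The plan is to reduce to Lemma~\ref{lem} by stratifying $\cA = \cA_1 \sqcup \cA_2$, where $\cA_1$ consists of the non-periodic points of $\cA$ and $\cA_2$ of the periodic ones (which are in fact fixed, by hypothesis). For $\alpha \in \cA_1$, the hypothesis supplies that $\alpha$ is not periodic modulo $\fp$, so Lemma~\ref{lem} applied directly to $\alpha$ furnishes a finite extension $E_\alpha$ of $K$; subsequently, for any finite $L \supseteq E_\alpha$, Lemma~\ref{lem} provides an integer $M_\alpha$ past which its conclusions hold for preimages of $\alpha$. The substantive content is the reduction of the $\cA_2$ case to that setting.

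For each fixed $\alpha \in \cA_2$, set $\cB_\alpha := \varphi^{-1}(\alpha) \setminus \{\alpha\} \subseteq \bP^1(\bK)$, a finite set. The key claim is that every $b \in \cB_\alpha$ is itself non-periodic modulo $\fp$. Indeed, for any $n \geq 1$, since $\varphi(b) = \alpha$ and $\alpha$ is fixed we have $\varphi^n(b) = \varphi^{n-1}(\alpha) = \alpha$, so $\varphi^n(b) \equiv b \pmod{\tfp}$ would force $b \equiv \alpha \pmod{\tfp}$. But the hypothesis that the ramification index of $\varphi$ at $\alpha$ is preserved modulo $\fp$ says exactly that $\alpha$ already accounts for its full multiplicity $e_\alpha$ in the reduced preimage count at $\alpha_{\fp}$, so the remaining $d - e_\alpha$ preimages counted with multiplicity — i.e., the elements of $\cB_\alpha$ — cannot reduce to $\alpha$ modulo $\tfp$. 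This contradiction proves the claim. Now enlarge $K$ to a finite extension $K'$ containing $\bigcup_{\alpha \in \cA_2} \cB_\alpha$; good reduction of $\varphi$ and the bound $\deg\varphi < \car k_\fp$ persist at any prime of $K'$ above $\fp$, so Lemma~\ref{lem} applied to each $b$ over $K'$ supplies an extension $E_b$.

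Let $E$ be the compositum of $K'$ with all $E_\alpha$ ($\alpha \in \cA_1$) and all $E_b$ ($b \in \cB_\alpha$, $\alpha \in \cA_2$). For any $L \supseteq E$, Lemma~\ref{lem} yields integers $M_\alpha$ and $M_b$, and I set $M := 1 + \max\bigl(\{M_\alpha\} \cup \{M_b\}\bigr)$. Take any $m \geq M$ and any $\beta$ with $\varphi^m(\beta) \in \cA$ and $\varphi^t(\beta) \notin \cA$ for $t < m$. If $\varphi^m(\beta) = \alpha \in \cA_1$, Lemma~\ref{lem} applied to $\alpha$ with $m \geq M_\alpha$ delivers (i) and (ii). If $\varphi^m(\beta) = \alpha \in \cA_2$, then $\varphi^{m-1}(\beta) \in \varphi^{-1}(\alpha)$; since $\alpha \in \cA$ but $\varphi^{m-1}(\beta) \notin \cA$, we must have $\varphi^{m-1}(\beta) = b$ for some $b \in \cB_\alpha$, so $\beta$ is a preimage of $b$ under $\varphi^{m-1}$ with $m - 1 \geq M_b$, and Lemma~\ref{lem} applied to $b$ gives (i) and (ii) for the same $\fr$ and $\fq$ required by the Proposition.

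The main obstacle, and the reason for the ramification hypothesis, is the non-periodicity of $\cB_\alpha$ modulo $\fp$ in the fixed-point case: without that hypothesis, a preimage $b$ of $\alpha$ could collapse onto $\alpha$ after reduction and thereby inherit $\alpha$'s trivial periodicity modulo $\fp$, blocking any use of Lemma~\ref{lem} for such $b$.
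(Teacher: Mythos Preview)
Your proof is correct and follows essentially the same approach as the paper's: split $\cA$ into non-periodic points (apply Lemma~\ref{lem} directly) and fixed points (apply Lemma~\ref{lem} to each element of $\varphi^{-1}(\alpha)\setminus\{\alpha\}$, using the ramification-index hypothesis to rule out $b\equiv\alpha\pmod{\tfp}$), take the compositum for $E$, and set $M$ to be one more than the maximum of the resulting constants. If anything, you are slightly more careful than the paper in explicitly passing to the extension $K'$ containing the $\cB_\alpha$ before invoking Lemma~\ref{lem}, a point the paper leaves implicit.
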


\begin{proof}
For each $\alpha_i \in \cA$ that is not periodic, we apply
Lemma~\ref{lem} and obtain a field $E_i$ with the property described
in that Lemma.
For each $\alpha_j \in \cA$ that is periodic, we apply
Lemma~\ref{lem} to each point
$\gamma_{jk} \in\varphi^{-1}(\alpha_j)\setminus\{\alpha_j\}$
and obtain a field $E_{jk}$ with the corresponding property.
To do so, of course, we must know that no $\gamma_{jk}$
is periodic modulo $\tfp$.  To see that this is true, first note
that $\gamma_{jk}\not\equiv \alpha_j \pmod{\tfp}$;
otherwise the ramification index of $\varphi$
at $\alpha_j$ would be greater modulo $\fp$ than over $K$,
contradicting our hypotheses.
Since $\alpha_j$ is fixed and
$\gamma_{jk}\not\equiv \alpha_j \pmod{\tfp}$,
it follows that $\gamma_{jk}$ is not periodic modulo
$\tfp$, as desired.


Let $E$ be the compositum of all the fields $E_i$ and $E_{jk}$.
Given any finite extension $L$ of $E$, then by our choice
of $E_i$ and $E_{jk}$, there are integers $M_i,M_{jk}\in\bN$
satisfying the conclusions of Lemma~\ref{lem}.
Set
$$M := \max_{i,j,k}(M_i, M_{jk})+1.$$
Then for any $m\geq M$ and $\beta\in\bP^1(\bK)$ such
that $\varphi^m(\beta) \in \cA$ but $\varphi^t(\beta) \notin \cA$ for
all $0\leq t < m$, we have $\varphi^{m-1}(\beta) \notin \cA$.  Hence,
$\varphi^{m-1}(\beta)$ is either some $\gamma_{jk}$ or is in
$\varphi^{-1}(\alpha_i)$ for some nonperiodic $\alpha_i$;
that is, $\beta$ is an element
either of some $\varphi^{-(m-1)}(\gamma_{jk})$
or of $\varphi^{-m}(\alpha_i)$ for some nonperiodic $\alpha_i$.
Thus, by the conclusions of Lemma~\ref{lem},
$\beta$ satisfies conditions (i) and (ii), as desired.
\end{proof}

We will now apply Proposition~\ref{prop} to several maps
$\varphi_1, \dots, \varphi_g$ at once
to obtain a proof of Theorem~\ref{proportion}.

\begin{proof}[Proof of Theorem~\ref{proportion}]
  We note first that it suffices to prove our result for a
  finite extension of $K$.
  Indeed, if $L/K$ is a finite extension
  and $\fq \cap \fo_K = \fr$ for a prime $\fq \subseteq \fo_L$,
  then $\varphi_i^m(\gamma)$ is congruent to $\alpha$ modulo $\fq$
  if and only if $\varphi_i^m(\gamma)$ is
  congruent to $\alpha$ modulo $\fr$.  Moreover, given a positive
  density set of primes $\cQ$ of $L$, the set
  $\cP = \{\fq\cap\fo_K : \fq\in\cQ\}$ also has positive density
  as a set of primes of $K$.

  We may assume, for all $i=1,\ldots, g$, that every
  $\varphi_i$-preperiodic point $\alpha\in\cA_i$ is in fact fixed
  by $\varphi_i$.  Indeed, for each such $i$ and $\alpha$,
  choose integers $j_\alpha\geq 0$ and $\ell_{\alpha}\geq 1$
  such that $\varphi_i^{j_{\alpha}}(\alpha) =
  \varphi_i^{j_{\alpha}+\ell_{\alpha}}(\alpha)$.  Set
  $j:=\max_{\alpha}\{j_{\alpha}\}$, and replace each
  $\alpha\in\cA_i$ by $\varphi_i^{j}(\alpha)$.
  Similarly, set $\ell := \lcm_{\alpha}\{\ell_{\alpha}\}$,
  and enlarge each $\cT_i = \{ \gamma_{i1}, \dots, \gamma_{is_i} \}$
  to include $\varphi_i^b(\gamma_{ic})$ for all $b=1, \dots, \ell - 1$
  and $c = 1, \dots, s_i$.  Finally, replace each $\varphi_i$
  by $\varphi_i^{\ell}$, so that for the new data, all the
  $\varphi_i$-preperiodic points in $\alpha\in\cA_i$ are fixed
  by $\varphi_i$.
  If the Theorem holds for the new data, then it holds for
  the original data, since for any $m\geq M$ and any prime $\fp$
  at which every $\varphi_i$ has good reduction,
  $\varphi_i^m(\gamma_{ij}) \equiv   \alpha \pmod {\fp}$
  implies
  $(\varphi_i^\ell)^a (\varphi_i^b(\gamma_{ij})) \equiv
  \varphi_i^j(\alpha) \pmod {\fp}$,
  writing $m+j$ as $a \ell + b$ with $a\geq 0$
  and $0\leq b < \ell$.

  We fix the following notation for the remainder of the proof.
  If there is any index $i$ such that $\cA_i$  contains a nonperiodic
  point, we may assume that this happens for $i=1$, and we denote the
  nonperiodic point by $\alpha'$.  By hypothesis, all points
  in $\cA_1\smallsetminus\{\alpha'\}$ are $\varphi_1$-preperiodic,
  and we denote them by $\alpha_{1j}$; similarly,
  for each $i\geq 2$, all points in $\cA_i$ are
  $\varphi_i$-preperiodic, and we denote them by $\alpha_{ij}$.
  By the previous paragraph, we may assume that $\varphi_i$
  fixes $\alpha_{ij}$ for all $i,j$.

Note that there are
only finitely many primes $\fp$ of bad reduction
for any $\varphi_i$, finitely many for which
$\car k_{\fp}\leq \max_i \{\deg\varphi_i\}$,
and finitely many such that the
ramification index of $\varphi_i$
at some $\alpha_{ij} \in \cA_i$ is greater modulo $\fp$
than over $K$.
On the other hand,
by \cite[Lemma~4.3]{DML}, there are infinitely many primes
$\fp$ of $K$ such that
$\alpha'$ is not $\varphi_1$-periodic modulo $\fp$.
Hence, we may choose such a prime $\fp$, and then a prime
$\tfp$ of $\fo_{\bK}$ for which $\fp=\tfp\cap\fo_K$,
that simultaneously satisfy, for each $i=1,\ldots, g$,
the hypotheses of Proposition~\ref{prop}
for $\varphi_i$ and $\cA_i$.

Applying Proposition~\ref{prop}, for each $i=1,\ldots,g$
we obtain finite extensions $E_i$ of $K$
satisfying the conclusions
of that result.  Let $L$ be the compositum
of the fields $E_1,\ldots,E_g$, and let $\fq=\tfp\cap\fo_L$.
Then for all $i=1,\ldots, g$,
all sufficiently large $M$,
and all $\beta \in \bK$ such that
$\varphi_i^M(\beta) \in \cA_i$
but $\varphi_i^t(\beta) \notin \cA_i$ for $0\leq t < M$, we have
\begin{enumerate}
\item $\fr$ does not ramify over $\fq$, and
\item $[\fo_{L(\beta)}/\fr : \fo_L/ \fq] > 1$,
\end{enumerate}
where $\fr=\tfp\cap\fo_{L(\beta)}$.
As noted at the start of this proof, it suffices to prove
the Theorem for the field $L$.

Fix such a sufficiently large integer $M$,
and let $F/L$ be the finite extension obtained
by adjoining all points $\beta\in\bP^1(\bL)$
such that for some $i = 1,\dots, g$
we have $\varphi_i^M(\beta) \in \cA_i$ but
$\varphi_i^t(\beta) \notin \cA_i$ for all $0\leq t < M$.
Note that $F/L$ is a Galois extension,
since each $\cA_i$ and each $\varphi_i$ is defined over $L$.
Moreover, by property~(i) above, $F/L$ is unramified over $\fq$.
By property~(ii), then, the Frobenius element of $\fq$
belongs to a conjugacy class of
$\Gal(F/L)$ whose members do not fix any of the points $\beta$.
By the Chebotarev density theorem (Theorem~\ref{Chebotarev}),
then, there is a positive density set of primes $\cS$ of $L$
whose Frobenius conjugacy classes
in $\Gal(F/L)$ do not fix any of the points $\beta$.

Fix any prime $\fr\in\cS$.  We make the following claim.

\begin{claim}
\label{redclaim}
Let $m\geq 0$, let $1\leq i\leq g$,
and let $z\in\bP^1(L)$ be a point such that
$\varphi_i^m(z)$ is congruent modulo $\fr$ to an element
of $\cA_i$.
Then there is some $0\leq t < M$ such that $\varphi_i^t(z)$
is congruent modulo $\fr$ to an element of $\cA_i$.
\end{claim}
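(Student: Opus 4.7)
The plan is to argue by contradiction. Assume that $\varphi_i^t(z) \not\equiv \alpha \pmod{\fr}$ for every $0 \leq t < M$ and every $\alpha \in \cA_i$; in particular $m \geq M$. Let $m'$ be the minimal integer with $M \leq m' \leq m$ such that $\varphi_i^{m'}(z) \equiv \alpha'' \pmod{\fr}$ for some $\alpha'' \in \cA_i$, and set $z' := \varphi_i^{m'-M}(z) \in \bP^1(L)$. Then $\varphi_i^M(z') \equiv \alpha'' \pmod{\fr}$. Moreover, for each $0 \leq t < M$, $\varphi_i^t(z') = \varphi_i^{m'-M+t}(z)$ is not congruent mod $\fr$ to any element of $\cA_i$: the exponent $m'-M+t$ either lies in $[0,M)$ and is excluded by the contradiction hypothesis, or in $[M,m')$ and is excluded by the minimality of $m'$.

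The next step is to lift the mod-$\fr$ solution $z'$ of $\varphi_i^M(x) \equiv \alpha''$ to an exact root over $\bK$. Since $\varphi_i$, and hence $\varphi_i^M$, has good reduction at $\fp$, it extends to a finite flat morphism $\bP^1_{\fo_{\fp}} \to \bP^1_{\fo_{\fp}}$ whose fiber over $\alpha''$ is $\fo_{\fp}$-flat of constant degree $(\deg\varphi_i)^M$. Every solution of $\varphi_i^M(x) \equiv \alpha''$ in the residue field therefore lifts to a solution in $\bP^1(\bK)$, so there exists $\beta \in \bP^1(\bK)$ with $\varphi_i^M(\beta) = \alpha''$ and $\beta \equiv z' \pmod{\tfp}$ (for our fixed $\tfp$ above $\fr$).

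Finally, I would observe that $\beta$ must in fact be one of the points adjoined in forming $F/L$. Indeed, if $\varphi_i^t(\beta) \in \cA_i$ for some $0 \leq t < M$, good reduction gives $\varphi_i^t(z') \equiv \varphi_i^t(\beta) \pmod{\tfp}$, contradicting the property of $z'$ established in the first paragraph. Hence $\beta$ is a generator of $F/L$, and by the choice of $\fr$ the Frobenius class at $\fr$ in $\Gal(F/L)$ does not fix $\beta$; by Proposition~\ref{prop}(ii) this is precisely the statement that the residue of $\beta$ at $\tfp\cap\fo_{L(\beta)}$ does not lie in $\fo_L/\fr$. But $\beta \equiv z' \pmod{\tfp}$ together with $z' \in \bP^1(L)$ forces the residue of $\beta$ to coincide with that of $z'$, which \emph{is} an element of $\fo_L/\fr$. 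This contradiction proves the claim. I expect the most delicate point to be the lifting in the second paragraph, since that is where good reduction really gets used; afterwards, the match between the Galois condition (Frobenius fails to fix $\beta$) and the residue-degree condition is exactly what Proposition~\ref{prop} was tailored to supply, and the argument closes quickly.
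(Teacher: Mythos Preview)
Your argument follows the same route as the paper's: pick the minimal $m'\geq M$ with $\varphi_i^{m'}(z)$ congruent to something in $\cA_i$, set $z'=\varphi_i^{m'-M}(z)$, and reach a contradiction with the defining property of $\cS$. The paper compresses all of this into one line (``by the defining property of the set of primes $\cS$, there cannot be any points $w\in\bP^1(L)$ such that\ldots''), and your second paragraph makes explicit the lifting step that this line hides. That expansion is correct and helpful.

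There is one genuine mis-step in your final paragraph: the appeal to Proposition~\ref{prop}(ii) is out of place. That proposition concerns only the \emph{original} prime $\fq=\tfp\cap\fo_L$ used to manufacture $\cS$ via Chebotarev; it tells you nothing about a general $\fr\in\cS$. What you actually need is the elementary fact that if $F/L$ is unramified at $\fr$ and $\sigma$ is the Frobenius at a prime $\fr'$ of $F$ above $\fr$, then $\bar\beta\in\fo_L/\fr$ if and only if $\sigma(\beta)\equiv\beta\pmod{\fr'}$. Since $\sigma(\beta)$ is another of the adjoined generators, and since for all but finitely many $\fr$ distinct generators have distinct reductions (discard the primes dividing their pairwise differences), $\sigma(\beta)\neq\beta$ forces $\bar\beta\notin\fo_L/\fr$, giving the contradiction with $\bar\beta=\bar{z'}$. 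This is the correct justification; it is not Proposition~\ref{prop}(ii). Relatedly, watch the notation: the $\tfp$ and $\fp$ in your lifting paragraph are primes above and below $\fr$, not the original $\tfp$ and $\fp$ fixed earlier in the proof of Theorem~\ref{proportion}, so good reduction at $\fr$ (true for all but finitely many $\fr$) is what you are really using.
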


To prove the claim, note first that the conclusion
is vacuous if $m<M$; thus, we may assume that $m\geq M$.
In fact, given any index $i$ and point $z$ as in the claim,
we may assume that $m$ is the minimal integer $m\geq M$
satisfying the hypothesis, namely that
$\varphi_i^m(z)=\varphi_i^M(\varphi_i^{m-M}(z))$
is congruent modulo $\fr$ to an element of $\cA_i$.
However, by the defining property of the set of primes $\cS$,
there cannot be any points $w \in \bP^1(L)$ such
that $\varphi_i^M(w)$ is congruent modulo $\fr$ to an element of
$\cA_i$ but $\varphi_i^t(w) \notin \cA_i$
for all $0\leq t < M$.
Choosing $w=\varphi_i^{m-M}(z)\in\bP^1(L)$, then,
there must be some $0\leq t < M$ such that
$\varphi_i^t(\varphi_i^{m-M}(z))$ is congruent modulo $\fr$ to
an element of $\cA_i$.  Thus,
$\varphi_i^{m-M+t}(z)$ is congruent modulo $\fr$ to
an element of $\cA_i$;
but $0\leq m-M+t <m$,
contradicting the minimality of $m$ and proving
Claim~\ref{redclaim}.

Let $\cU$ be the subset of $\cS$ consisting of primes $\fr \in \cS$
such that one or more of the following holds:
\begin{enumerate}
\item $\varphi_i^t(\gamma)\equiv \alpha_{ij} \pmod {\fr}$
for some $i=1,\ldots, g$, some $\gamma\in\cT_i$,
some $\varphi_i$-periodic $\alpha_{ij} \in \cA_i$, and some $0\leq t < M$;
or
\item $\varphi_1^t(\alpha') \equiv \alpha \pmod {\fr}$ for some
  $\alpha\in\cA_1$ and some $1\leq t \leq M$.
\end{enumerate}
Note, for each $\varphi_i$-periodic $\alpha_{ij} \in \cA_i$,
we cannot have $\varphi_i^r(\gamma) = \alpha_{ij}$
for any $r\geq 0$ and any $\gamma \in \cT_i$,
since the elements of $\cT_i$ are not $\varphi_i$-preperiodic.
(However, it \emph{is} possible that $\varphi_1^r(\gamma) = \alpha'$
for some $r$ and some $\gamma \in \cT_1$.)  Thus, $\cU$
is a finite subset of $\cS$, and hence
$\cS' := \cS \setminus \cU$ has positive density.
We will now show that the Theorem holds
for the field $L$, the integer $M$, and this set of primes $\cS'$.

Suppose there exist a prime $\fr\in\cS'$, an index
$1\leq i\leq g$, points $\alpha\in\cA_i$ and $\gamma\in\cT_i$,
and an integer $m\geq M$ such that
$\varphi_i^m(\gamma)\equiv \alpha \pmod{\fr}$.
By Claim~\ref{redclaim}, there is an integer $0\leq t < M$
and a point $\tilde{\alpha}\in\cA_i$ such that
$\varphi_i^t(\gamma)\equiv \tilde{\alpha} \pmod{\fr}$.
By property~(i) above, then, we must have $i=1$ and
$\tilde{\alpha}=\alpha'$.  Moreover, since
$\varphi_1^{m-t-1}(\varphi_1(\alpha')) \equiv \alpha\pmod{\fr}$,
and since $m-t-1\geq 0$,
Claim~\ref{redclaim} tells us that there is some
$0\leq k < M$ such that $\varphi_1^{k+1}(\alpha')$
is congruent modulo $\fr$ to an element of $\cA_1$,
contradicting property~(ii) above,
and hence proving the Theorem.
\end{proof}

\section{Applications}\label{apply}


\subsection{Proofs of the Corollaries}

We are now prepared to prove the Corollaries of Theorem~\ref{proportion}
stated in Section~\ref{intro}.

\begin{proof}[Proof of Corollary~\ref{positive non periodic}]
We begin by noting that $\varphi$ has good reduction at all but finitely
many primes $\fp$ of $K$.

If $\alpha$ is $\varphi$-preperiodic but not $\varphi$-periodic,
then for all but
finitely many primes $\fp$ of $K$, the reductions modulo $\fp$ of the
finitely many points in the forward orbit of $\alpha$ are
all distinct.  Hence $\alpha_{\fp}$ is
$\varphi_{\fp}$-preperiodic but not $\varphi_{\fp}$-periodic.
Thus, we may assume that $\alpha$ is not $\varphi$-preperiodic.

Applying
Theorem~\ref{proportion} with $g=1$, $\varphi_1=\varphi$, and
$\cT_1=\cA_1=\{\alpha\}$, there is a positive density set
of primes $\fp$ of $K$ for
which $$\varphi^m(\alpha)\not\equiv\alpha\pmod{\fp}$$ for \emph{all}
sufficiently large $m$. Hence, $\alpha_{\fp}$ is not
$\varphi_{\fp}$-periodic.
\end{proof}

\begin{proof}[Proof of Corollary~\ref{positive non cyclic}]
We begin by noting that $E$ has good reduction at
all but finitely many primes $\fp$ of $K$.
(For more details on elliptic curves, see~\cite{AEC}).
Write $E$ in Weierstrass form, and
let $x:E\to\bP^1$ be the morphism
that takes a point $P$ to the $x$-coordinate of $P$.
Let $[q]:E\to E$ denote the multiplication-by-$q$ map,
and let $\varphi\in K(x)$ be the associated Latt\`{e}s map;
that is, $\varphi$ satisfies  the identity
$x\circ [q] = \varphi\circ x$.

Since $Q$ is not torsion,
the point $[q^{n-1}]Q\in E(K)$ is also not torsion,
and therefore its $x$-coordinate
$\alpha:=x([q^{n-1}]Q)\in\bP^1(K)$
is not $\varphi$-preperiodic.  Hence,
by Corollary~\ref{positive non periodic},
there is a positive density set of primes $\fp$ of
$K$ such that the reduction $\alpha_{\fp}$ is not
$\varphi_{\fp}$-periodic.
Equivalently, $[q^{m+n-1}](Q_{\fp})\neq  [q^{n-1}]Q_{\fp}$
for all $m\geq 1$.  However, if the $q$-primary part of
the order of $Q_{\fp}$ were at most $q^{n-1}$,
then there \emph{would} be some $m\geq 1$ such that
$[q^{n-1+m}](Q_{\fp})= [q^{n-1}]Q_{\fp}$.  Thus, $q^n$ must
divide the order of $Q_{\fp}$.
\end{proof}

\subsection{Dynamical Mordell-Lang problems}
The following conjecture was proposed in \cite{Mike, newlog}.

\begin{conjecture}
[The cyclic case of the Dynamical Mordell-Lang Conjecture]
\label{DML}
Let $X$ be a quasiprojective variety defined over $\bC$,
let $\Phi$ be an endomorphism of $X$,
let $V\subset X$ be a closed subvariety, and
let $x\in X(\bC)$ be an arbitrary point.
Then the set of integers $n\in \mathbb{N}$ such that
$\Phi^n(x)\in V(\bC)$ is a union of finitely many arithmetic progressions
$\{nk+\ell\}_{n\in\mathbb{N}}$,
where $k,\ell\geq 0$ are nonnegative integers.
\end{conjecture}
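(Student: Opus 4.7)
The plan is to follow the $p$-adic strategy that has powered every known special case of this conjecture, going back to Skolem-Mahler-Lech and refined in the dynamical setting by Bell, Ghioca, and Tucker. First I would make a series of harmless reductions: replace $X$ by an affine open containing $x$ whose image under a sufficiently high iterate stays affine (using that the orbit closure is a countable union), and spread $X,\Phi,V,x$ out over a finitely generated subring $R\subset\bC$; by a standard specialization argument one reduces to the case that everything is defined over a number field $K$, and then $x\in X(K)$, $\Phi$ is defined over $K$, and $V$ is a $K$-subvariety. One may also replace $\Phi$ by an iterate, so that it suffices to prove the return set $\{n\in\bN:\Phi^n(x)\in V\}$ is either all but finitely many $n$ or finite.

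The heart of the argument is to produce, for some prime $\fp$ of $K$ of good reduction, an integer $N\ge 1$ and a $\fp$-adic analytic map
\[
F:\bZ_p\lra X(K_\fp)
\]
such that $F(n)=\Phi^{Nn}(x)$ for all $n\in\bN$. Granting this, the preimage $F^{-1}(V)\subset\bZ_p$ is the zero locus of finitely many convergent $p$-adic power series, hence by Strassmann's theorem is either all of $\bZ_p$ or a finite set. Partitioning $\bN$ into residue classes mod $N$ and repeating the construction on each coset $\{Nn+j\}_{n\in\bN}$ then exhibits the return set as a finite union of arithmetic progressions and a finite set, as required.

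The construction of $F$ is the main obstacle, and it is exactly where current techniques break down in the generality of the conjecture. The standard recipe is to choose $\fp$ of good reduction such that $\Phi$ reduces to an étale morphism in a neighborhood of the orbit mod $\fp$; then a suitable iterate $\Phi^N$ fixes $\bar x=x\bmod\fp$ and acts on the formal neighborhood of $\bar x$ by a power series whose linear part lies in $1+\fp\cdot\mathrm{Mat}$, so that $\Phi^N$ becomes the time-$1$ map of a $\fp$-adic flow and the desired $F$ is obtained by interpolation. Theorem~\ref{proportion} contributes to this step by supplying a positive-density set of primes at which prescribed orbit incidences do not occur, which rules out certain obstructions to extracting such an $N$ and such a $\fp$. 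One then needs to arrange simultaneously that (i) $\Phi$ has good reduction, (ii) the derivative of $\Phi^N$ at $\bar x$ has all eigenvalues congruent to $1$ mod $\fp$, and (iii) $\Phi$ is étale near $x$ over $\fo_{K,\fp}$.

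The hard part is condition (iii): if $\Phi$ is ramified along a subvariety meeting the orbit, one loses the $\fp$-adic analytic lift, and no general substitute is known. If $\Phi$ is étale one can apply the Bell-Ghioca-Tucker theorem directly and the conjecture follows. Outside the étale case, one would have to either find an étale model birational to $(X,\Phi)$ (which does not always exist), or develop a theory of $\fp$-adic analytic orbits through critical loci, perhaps by blowing up to resolve ramification and controlling the interaction of the blowup with $V$. Pending such an advance, the proposal above produces an unconditional proof only in the étale case and, via the extra input of Theorem~\ref{proportion}, in certain further cases where the ramification of $\Phi$ can be shown to avoid the orbit for a positive-density set of primes.
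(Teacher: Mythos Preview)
The statement you are attempting to prove is labeled a \emph{Conjecture} in the paper, and the paper does not prove it; it establishes only the special cases recorded in Theorems~\ref{ML-type}, \ref{quadratic}, and~\ref{quadratic2}. Your proposal is therefore not being compared against a proof in the paper, because there is none. You yourself concede this: the argument you outline is conditional on being able to find a prime $\fp$ at which the orbit avoids the ramification locus (your condition~(iii)), and you note that ``no general substitute is known.'' So what you have written is not a proof of Conjecture~\ref{DML} but rather a correct summary of the $p$-adic interpolation strategy used for the known cases, together with an honest identification of where it breaks.

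Two further remarks. First, your reduction from $\bC$ to a number field by ``a standard specialization argument'' is not as routine as you suggest: preserving both the dynamical hypotheses and the non-incidence $\Phi^n(x)\notin V$ under specialization requires care, and in the generality of an arbitrary endomorphism of a quasiprojective variety this step is itself open. Second, and more damagingly for your program, Section~\ref{counter} of the paper gives heuristic and numerical evidence that your condition~(iii) should \emph{fail for every prime} when $d\ge 5$: the random-map model predicts that the periodic part of the orbit modulo $p$ meets the ramification divisor for all sufficiently large $p$ with positive global probability. So the obstacle you flag is not merely a technical gap awaiting a sharper lemma; the paper argues that the entire approach of avoiding ramification modulo a single prime is likely unsalvageable in high dimension, and a genuinely new idea would be needed.
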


Theorem~\ref{proportion} allows us to prove a few new cases of
Conjecture~\ref{DML} over number fields.

\begin{theorem}\label{ML-type}
  Let $K$ be a number field,
  let $V\subset \left(\bP^1\right)^g$ be a subvariety defined over $K$,
  let $x=(x_1,\ldots,x_g)\in(\bP^1)^g(K)$,
  and let
  $\Phi:=(\varphi_1,\dots,\varphi_g)$ act on $\left(\bP^1\right)^g$
  coordinatewise, where each $\varphi_i \in K(t)$ is a rational
  function of degree at least~2.
  Suppose that at most one $\varphi_i$ has a critical point $\alpha$
  that is not $\varphi_i$-preperiodic, and that all other critical
  points of that $\varphi_i$ are preperiodic.
  Then the set of integers $n\in \mathbb{N}$ such that
  $\Phi^n(x)\in V(\bK)$ is a union of finitely many arithmetic progressions
  $\{nk+\ell\}_{n\in\mathbb{N}}$,
  where $k,\ell\geq 0$ are nonnegative integers.
\end{theorem}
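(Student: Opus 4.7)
My plan is to reduce to the $\fp$-adic analytic-arc framework used for the Dynamical Mordell--Lang theorem in~\cite{DML}. The goal is to produce a single prime $\fp$ and a common iterate-period $N$ with the property that, for each coordinate $i$ and each residue class $k\pmod N$, the map $n\mapsto \varphi_i^{Nn+k}(x_i)$ extends to a $\fp$-adic analytic function $h_i\colon \bZ_p\to \bP^1(\Cp)$. Substituting these $g$ parametrizations into the defining equations of $V$ will convert $\{n:\Phi^n(x)\in V\}$ into the zero set of a system of $\fp$-adic analytic functions of the integer variable $n$. Strassman's theorem then forces the zeros inside each residue class to be either finite or all of $\bZ$, and the union over the $N$ classes is the announced finite union of arithmetic progressions.

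First, any coordinate $x_i$ that is $\varphi_i$-preperiodic takes only finitely many values, so by partitioning $n$ into finitely many residue classes on which those coordinates are constant, I may assume no $x_i$ is preperiodic. I would then enlarge $K$ if necessary so that every critical point of each $\varphi_i$ is $K$-rational; this is harmless by the reduction step at the start of the proof of Theorem~\ref{proportion}. Next, apply Theorem~\ref{proportion} with the same $\varphi_1,\dots,\varphi_g$, with $\cT_i:=\{x_i\}$, and with $\cA_i$ the finite set of critical points of $\varphi_i$. The critical-point hypothesis of the present theorem is exactly what Theorem~\ref{proportion} demands: at most one $\cA_i$ contains a non-preperiodic point, and at most one such point. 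This produces a positive-density set $\cP$ of primes together with an integer $M$ such that, for every $\fp\in\cP$, every $m\geq M$, and every $i$, the iterate $\varphi_i^m(x_i)$ is not congruent modulo $\fp$ to any critical point of $\varphi_i$.

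Now fix any such $\fp\in\cP$ of residue characteristic exceeding $\max_i \deg\varphi_i$. The tail $\{\varphi_i^m(x_i)\}_{m\geq M}$ then avoids the critical locus of $\varphi_i$ modulo $\fp$, so $\varphi_i$ is unramified at every tail point modulo $\fp$. Because $\bP^1(k_\fp)$ is finite, the tail eventually enters a periodic cycle whose multiplier is a $\fp$-adic unit; replacing each $\varphi_i$ by a common iterate $\varphi_i^N$, chosen so that every such multiplier is congruent to $1$ modulo a suitably high power of $\fp$, will put us in the attracting-analytic-arc situation of~\cite{DML}, and the arc lemma there yields the parametrizations $h_i$ promised in the first paragraph.

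The delicate step is the simultaneous construction of $\fp$: Theorem~\ref{proportion} is the tool that packages the $g$ separate good-reduction and critical-avoidance requirements into a single positive-density set of primes in one application of Chebotarev applied to a single compositum of splitting fields. Once $\fp$ is fixed, the remaining $\fp$-adic analysis is by now standard; see~\cite{DML,newlog}.
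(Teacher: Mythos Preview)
Your proposal is correct and follows essentially the same route as the paper: reduce to non-preperiodic $x_i$, apply Theorem~\ref{proportion} with $\cA_i$ the set of critical points of $\varphi_i$ to find a prime $\fp$ modulo which the tail of each orbit avoids the critical locus, deduce that the relevant periodic-cycle multipliers are $\fp$-adic units, and then invoke the $\fp$-adic analytic machinery of~\cite{DML} (the paper simply cites \cite[Theorem~3.4]{DML} as a black box rather than unpacking the Strassman step as you do). One terminological slip: the situation you land in is the \emph{non}-attracting (indifferent) case, not an ``attracting-analytic-arc'' situation---the entire purpose of avoiding the critical locus modulo $\fp$ is to force the multiplier to be a unit, which is exactly the hypothesis under which the analytic interpolation of~\cite{DML} is available.
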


\begin{proof}
  If $x_g$ is $\varphi_g$-preperiodic, we can absorb the first finitely
  many iterates that may lie on $V$ into trivial arithmetic
  progressions $\{nk+\ell\}_{n\geq 0}$ with $k=0$.  Thus, we may
  assume that $x_g$ is $\varphi_g$-periodic, of some period $j\geq 1$.
  By restricting our attention to
  progressions $\{nk+\ell\}_{n\geq 0}$ with $j|k$, then,
  it suffices to assume $x_g$ is fixed by $\varphi_g$,
  and hence the dimension may be reduced to $g-1$.
  By induction on $g$, then, we may assume that no $x_i$ is
  $\varphi_i$-preperiodic.

  By Theorem~\ref{proportion}, there exist a
  constant $M$ and a positive proportion of primes $\fp$ of $K$ such
  that for each $i=1,\ldots, g$,
  $\varphi_i$ has good reduction at $\fp$,
  $\deg\varphi_i < \car k_{\fp}$,
  and $\varphi_i^m(x_i)$ is
  not congruent modulo $\fp$ to any critical point of $\varphi_i$
  for all $m \geq M$.
  Fix any such $\fp$, and note that
  the derivative of the reduction $(\varphi_{i,\fp})'$
  is nontrivial, because $\varphi_i$ has good reduction
  and $1\leq \deg\varphi_i<\car k_{\fp}$.
  Thus, $\varphi'_i(\varphi_i^m(x_i))$, or its appropriate
  analogue if $\varphi_i^m(x_i)$ lies in the residue class
  at $\infty$, is a $\fp$-adic unit for all $m \geq M$.
  It follows that $\varphi_i^m(x_i)\not\equiv \gamma \pmod{\fp}$
  for any attracting periodic point $\gamma$ of $\varphi_i$;
  applying \cite[Theorem 3.4]{DML} completes our proof.
\end{proof}

To state the following special case of Theorem~\ref{ML-type},
we recall that
a rational function is said to be {\it post-critically finite} if all
of its critical points are preperiodic.

\begin{cor}\label{ML-type cor}
  Let $K$ be a number field,
  let $V\subset \left(\bP^1\right)^g$ be a subvariety defined over $K$,
  let $x=(x_1,\ldots,x_g)\in(\bP^1)^g(K)$,
  and let
  $\Phi:=(\varphi_1,\dots,\varphi_g)$ act on $\left(\bP^1\right)^g$
  coordinatewise, where each $\varphi_i \in K(t)$ is
  post-critically finite and of degree at least~2.
  Then the set of integers $n\in \mathbb{N}$ such that
  $\Phi^n(x)\in V(\bK)$ is a union of finitely many arithmetic progressions
  $\{nk+\ell\}_{n\in\mathbb{N}}$,
  where $k,\ell\geq 0$ are nonnegative integers.
\end{cor}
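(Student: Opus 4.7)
The plan is to observe that Corollary~\ref{ML-type cor} is an immediate specialization of Theorem~\ref{ML-type}, so essentially no new work is required beyond matching the hypotheses. First I would unpack the definition of post-critically finite: by the definition recalled just before the corollary statement, saying that $\varphi_i$ is post-critically finite means that every critical point of $\varphi_i$ is $\varphi_i$-preperiodic. In particular, for each $i=1,\ldots,g$, the map $\varphi_i$ has \emph{no} critical point that fails to be $\varphi_i$-preperiodic.

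Next I would verify the hypothesis of Theorem~\ref{ML-type} in this setting. That hypothesis requires that at most one $\varphi_i$ has a non-preperiodic critical point $\alpha$, and that all other critical points of that particular $\varphi_i$ are preperiodic. Under the PCF assumption, the number of indices $i$ for which $\varphi_i$ has a non-preperiodic critical point is actually zero, which trivially satisfies ``at most one''; and there is no exceptional $\varphi_i$ whose remaining critical points must be checked. So the hypothesis of Theorem~\ref{ML-type} is satisfied a fortiori.

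Having matched the hypotheses, I would simply invoke Theorem~\ref{ML-type} to conclude that, for any subvariety $V \subset (\bP^1)^g$ defined over $K$ and any point $x \in (\bP^1)^g(K)$, the return set $\{n \in \bN : \Phi^n(x) \in V(\bK)\}$ is a finite union of arithmetic progressions $\{nk+\ell\}_{n \in \bN}$ with $k,\ell \geq 0$. There is no real obstacle here; the only thing worth flagging is a notational one, namely that the ``at most one'' clause in Theorem~\ref{ML-type} is meant to include the case of zero exceptional maps, which is precisely the situation produced by the PCF assumption. Once that minor point is acknowledged, the corollary follows in a single line.
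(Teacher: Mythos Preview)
Your proposal is correct and matches the paper's treatment exactly: the corollary is stated immediately after Theorem~\ref{ML-type} as a special case, with no separate proof given, precisely because the post-critically finite hypothesis makes the ``at most one non-preperiodic critical point'' condition vacuously true.
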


In the case that $\varphi_i = f$
for all $i$ for some quadratic polynomial $f$, we have the following result.

\begin{theorem}\label{quadratic}
  Let $K$ be a number field,
  let $V\subset \left(\bP^1\right)^g$ be a subvariety defined over $K$,
  let $x=(x_1,\ldots,x_g)\in(\bP^1)^g(K)$,
  let $f\in K[t]$ be a quadratic polynomial,
  and let $\Phi:=(f,\dots,f)$ act on $\left(\bP^1\right)^g$
  coordinatewise.
  Then the set of integers $n\in \mathbb{N}$ such that
  $\Phi^n(x)\in V(\bK)$ is a union of finitely many arithmetic progressions
  $\{nk+\ell\}_{n\in\mathbb{N}}$,
  where $k,\ell\geq 0$ are nonnegative integers.
\end{theorem}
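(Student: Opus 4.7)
The plan is to adapt the proof of Theorem~\ref{ML-type} to the setting where all coordinate maps coincide. We induct on $g$. If some $x_i$ is $f$-preperiodic---say, after relabelling, $x_g$, of eventual period $j\ge 1$---then we restrict to arithmetic progressions of period divisible by $j$; for $n$ at least the pre-period of $x_g$ and $n\equiv \ell\pmod{j}$, the $g$-th coordinate of $\Phi^n(x)$ equals $f^\ell(x_g)$, so the condition $\Phi^n(x)\in V$ becomes a condition on the first $g-1$ coordinates relative to the sliced subvariety $V_\ell := V\cap\{z_g = f^\ell(x_g)\}\subseteq(\bP^1)^{g-1}$. By the inductive hypothesis applied to $f$, $V_\ell$, and $(x_1,\ldots,x_{g-1})$, the set of such $n$ is a finite union of arithmetic progressions; intersecting with the residue class $n\equiv\ell\pmod{j}$ and then unioning over $\ell$ preserves this property, so we may assume no $x_i$ is $f$-preperiodic. (The base case $g=1$ with $x_1$ preperiodic is immediate since the forward orbit of $x_1$ is finite.)

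With this reduction the proof follows the template of Theorem~\ref{ML-type}, except that its $g$-fold hypothesis fails when $\varphi_1=\cdots=\varphi_g=f$ is not post-critically finite, so we must consolidate into a single-map application. The quadratic polynomial $f(t)=at^2+bt+c$ has exactly two critical points on $\bP^1$: the finite vertex $\gamma_0 = -b/(2a)\in K$ and the fixed point $\infty$. If $\gamma_0$ is $f$-preperiodic then $f$ is post-critically finite and Corollary~\ref{ML-type cor} already finishes the proof, so we may assume $\gamma_0$ is not preperiodic; then $\cA := \{\gamma_0,\infty\}$ contains exactly one non-preperiodic point. Applying Theorem~\ref{proportion} with $g=1$, $\varphi_1=f$, $\cT_1=\{x_1,\ldots,x_g\}$, and $\cA_1=\cA$ produces an integer $M$ and a positive-density set of primes $\cP$ of $K$ such that $f^m(x_i)\not\equiv\alpha\pmod{\fp}$ for every $\fp\in\cP$, every $1\le i\le g$, every $m\ge M$, and every $\alpha\in\cA$. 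After shrinking $\cP$ by finitely many primes we may further insist that $f$ has good reduction at every $\fp\in\cP$ and that $\car k_{\fp}>2$.

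From here the argument closes exactly as in Theorem~\ref{ML-type}: because $f^m(x_i)$ avoids the critical locus modulo $\fp$, the value $f'(f^m(x_i))$ is a $\fp$-adic unit for every $m\ge M$, and the chain-rule formula for the multiplier of a periodic cycle precludes $f^m(x_i)$ from being congruent modulo $\fp$ to any attracting periodic point of $f$. Invoking \cite[Theorem~3.4]{DML} then yields the desired decomposition of $\{n\in\bN : \Phi^n(x)\in V(\bK)\}$ as a finite union of arithmetic progressions. The main obstacle is the collapse of the multi-map hypothesis of Theorem~\ref{ML-type} when $\varphi_1=\cdots=\varphi_g=f$ is not post-critically finite; it is overcome because the quadratic $f$ has a \emph{single} finite critical point $\gamma_0$, making it legal to bundle all $g$ starting points into one target set $\cT_1$ for the $g=1$ instance of Theorem~\ref{proportion}.
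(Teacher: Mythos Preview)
Your proof is correct and follows essentially the same approach as the paper: reduce to the case where no $x_i$ is preperiodic, then apply Theorem~\ref{proportion} with the single map $f$, target set $\cT=\{x_1,\ldots,x_g\}$, and $\cA=\{\gamma_0,\infty\}$, and finish as in Theorem~\ref{ML-type}. The only cosmetic difference is that you split into cases on whether $\gamma_0$ is preperiodic, invoking Corollary~\ref{ML-type cor} when it is; the paper handles both cases at once, since the hypothesis of Theorem~\ref{proportion} (at most one non-preperiodic point in $\cA$) is satisfied either way.
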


\begin{proof}
As in the proof of Theorem~\ref{ML-type}, we may assume that
none of $x_1,\ldots,x_g$ is preperiodic.
Let $\cT = \{x_1,\ldots,x_g\}$, and
let $\cA = \{\alpha',\infty\}$,
where $\alpha'\in K$ is the unique finite critical point of $f$.
Then the map $f$, with the finite sets $\cA$ and $\cT$, satisfies
the hypotheses of Theorem~\ref{proportion}, and the rest
of the proof is exactly like that of Theorem~\ref{ML-type}.
\end{proof}

We obtain a similar result when each $f_i$ takes the form $x^2 +
c_i$ for $c_i \in \bZ$.

\begin{theorem}\label{quadratic2}
  Let $K$ be a number field,
  let $V\subset \left(\bP^1\right)^g$ be a subvariety defined over $K$,
  let $x=(x_1,\ldots,x_g)\in (\bP^1)^g(K)$,
  let $f_i(z) = z^2 + c_i$ with $c_i \in \bZ$ for $i = 1, \dots g$,
  and let $\Phi:=(f_1,\dots,f_g)$ act on $(\bP^1)^g$
  coordinatewise.
  Then the set of integers $n\in \mathbb{N}$ such that
  $\Phi^n(x)\in V(\bQb)$ is a union of finitely many arithmetic progressions
  $\{nk+\ell\}_{n\in\mathbb{N}}$,
  where $k,\ell\geq 0$ are nonnegative integers.
\end{theorem}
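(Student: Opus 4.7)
My plan is to mirror the proofs of Theorems~\ref{ML-type} and~\ref{quadratic}. After the same inductive reduction on $g$, I may assume that no $x_i$ is $f_i$-preperiodic. Clearing denominators, I may further assume each $x_i\in\fo_K$; then $c_i\in\bZ\subseteq\fo_K$ forces $f_i^m(x_i)\in\fo_K$ for every $m$, so $f_i^m(x_i)\not\equiv\infty\pmod{\fp}$ automatically at every finite prime $\fp$ of $K$. The task reduces to exhibiting a positive-density set of primes $\fp$ at which each $f_i$ has good reduction, $\operatorname{char} k_\fp>2$, and $f_i^m(x_i)\not\equiv 0\pmod{\fp}$ for every $i$ and every sufficiently large $m$. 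Once this is accomplished, the derivative $f_i'(f_i^m(x_i))=2f_i^m(x_i)$ is a $\fp$-adic unit along the tail of the orbit, so no attracting periodic point can be approached, and \cite[Theorem~3.4]{DML} finishes the argument exactly as at the end of the proof of Theorem~\ref{ML-type}.

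If at most one of the $c_i$ lies outside $\{-2,-1,0\}$, then $0$ is $f_i$-preperiodic for all but possibly one index, and Theorem~\ref{proportion} applies directly with $\varphi_i=f_i$, $\cA_i=\{0\}$, $\cT_i=\{x_i\}$ to produce the required set in one stroke, just as in Theorem~\ref{quadratic}. The substantive case is when multiple $c_i$ yield non-$f_i$-preperiodic critical orbits, which violates the ``at most one non-preperiodic point'' hypothesis of Theorem~\ref{proportion}. To handle it I apply Theorem~\ref{proportion} separately for each such $i$ (the single-map version with $g=1$, $\varphi_1=f_i$, $\cA_1=\{0\}$, $\cT_1=\{x_i\}$, which is permitted since $\cA_1$ contains only a single, possibly non-preperiodic, point), obtaining for each $i$ a positive-density set $\cP_i$ of primes of $K$ satisfying $f_i^m(x_i)\not\equiv 0\pmod{\fp}$ for all large $m$.

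The main obstacle is then to show that $\cP:=\bigcap_i \cP_i$ has positive density. Each $\cP_i$ is cut out by a Chebotarev condition on the Galois group of a finite Galois extension $L_i/K$, the extension produced inside the proof of Theorem~\ref{proportion} by adjoining the relevant iterated $f_i$-preimages of $0$ together with ramification loci. My plan is to form the compositum $L:=L_1L_2\cdots L_g$ and apply the Chebotarev density theorem jointly to $\Gal(L/K)$: the primes of $K$ whose Frobenius in $\Gal(L/K)$ restricts to the ``good'' conjugacy class in every $\Gal(L_i/K)$ simultaneously form a positive-density subset of $\cP$, provided such Frobenius elements exist. Here the integrality hypothesis $c_i\in\bZ$ is essential, for it tightly controls the arboreal Galois representations of the maps $z^2+c_i$ over $\bQ$ (by the work of Odoni, Jones, Stoll and others), forcing the extensions $L_i/K$ to be sufficiently independent that the required joint conjugacy class in $\Gal(L/K)$ is nonempty. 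This gives positive density for $\cP$ and completes the proof.
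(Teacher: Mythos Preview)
Your reduction and the endgame via \cite[Theorem~3.4]{DML} are fine, and your observation that $\infty$ is avoided automatically (at all but finitely many primes) is correct, though ``clearing denominators'' is not the right phrase: you cannot change $x_i$, but you can simply discard the finitely many primes at which some $x_i$ has negative valuation.

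The genuine gap is in your last paragraph. When several $c_i$ lie outside $\{-2,-1,0\}$, you apply Theorem~\ref{proportion} separately to each $f_i$ and then attempt to show that $\bigcap_i\cP_i$ has positive density by appealing to ``independence'' of the extensions $L_i$. This step is not justified. The fields $L_i$ produced inside the proof of Theorem~\ref{proportion} are not the standard arboreal extensions: they depend on an auxiliary prime $\fp_i$ (chosen via \cite[Lemma~4.3]{DML}) at which $0$ is non-periodic for $f_i$, and they adjoin additional points coming from the ramification analysis. There is no reason these ad hoc extensions, built from possibly different auxiliary primes for different $i$, should be linearly disjoint, and the works you cite do not establish this. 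Without disjointness, the joint Frobenius condition could be empty.

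The paper avoids this problem entirely by using a \emph{density-one} input rather than a positive-density one. By \cite[Theorem~1.2]{Jones}, for each $c_i\in\bZ\setminus\{0,-1,-2\}$ the set $\cU_i$ of primes at which $0$ is not $f_i$-periodic modulo $\fp$ has density~$1$; hence $\cS_1:=\bigcap_i\cU_i$ still has density~$1$. For any $\fp\in\cS_1$ and any such $i$, the periodic part of the orbit of $x_i$ modulo $\fp$ cannot contain $0$, so $f_i^m(x_i)\not\equiv 0\pmod{\fp}$ for all large $m$. One then applies Theorem~\ref{proportion} \emph{once}, jointly over all $i$, with $\cA_i=\{\infty\}$ for $c_i\notin\{0,-1,-2\}$ and $\cA_i=\{0,\infty\}$ for $c_i\in\{0,-1,-2\}$; every point in every $\cA_i$ is preperiodic, so the hypothesis is satisfied and one obtains a positive-density set $\cS_2$. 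Since $\cS_1$ has density~$1$, the intersection $\cS_1\cap\cS_2$ has positive density, and the rest follows as in Theorem~\ref{ML-type}. The crucial point you missed is that Jones gives density~$1$, which makes the intersection argument trivial and removes any need to analyze how the Galois extensions for different $c_i$ interact.
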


\begin{proof}
  As before, we can assume that no $x_i$ is $f_i$-preperiodic.
  By \cite[Theorem 1.2]{Jones}, for each $i=1,\ldots,g$
  such that $c_i \neq 0, -1, -2$, the set
  $\cU_i$ of primes $\fp$ for which $0$ is not periodic modulo $\fp$ has
  density~1.
  (The results in \cite{Jones} are stated for $x\in\Z$,
  but the same proof works for arbitrary $x\in K$.)
  Intersecting $\cU_i$ over all such $i$ gives a set of primes $\cS_1$
  of density~1.

  For each $i=1,\ldots, g$, define
  $\cA_i=\{\infty\}$ if $c_i\neq 0, -1, -2$,
  and   $\cA_i=\{0,\infty\}$ if $c_i= 0, -1, -2$.
  Because $0$ is $f_i$-preperiodic if $c_i$ is $0$, $-1$, or $-2$,
  and because $\infty$ is exceptional and fixed for any $c_i$,
  we may apply Theorem~\ref{proportion} and conclude that
  $$\cS_2 := \{ \fp \, : \, f_i^m(x_i) \notin \cA_i
  \text{ modulo } \fp \text{ for all } m\geq M\}$$
  must have positive density, for some $M\geq 0$.
  Thus, the set $\cS = \cS_1 \cap \cS_2$
  has positive density;
  the rest of the
  proof is now like that of Theorem~\ref{ML-type}.
\end{proof}

\subsection{Newton's method at finite places}
Consider a rational function $N(x)$ of the form
$N(x) = x - \frac{f(x)}{f'(x)}$,
where $f \in K[x]$ is a polynomial of degree at least~$2$.
Given $\gamma \in K$,
let $\cS$ be the set of
primes $\fp$ of $K$ such that $\{ N^m(x) \}_{m=1}^\infty$ converges
$\fp$-adically to a root of $f$.
In \cite{Newton}, Faber and Voloch conjecture
that $\cS$ has density~0; that is,
Newton's method for approximating roots of a polynomial ``fails''
at almost all finite places of $K$.
Although we cannot use our methods to
prove this conjecture, we can prove the following result, which says that
given a finite set of nonpreperiodic points and a finite set of
rational functions $N_i(x)$ arising from Newton's method, the set of primes
at which convergence fails has positive density.
In fact, we prove that for
large enough $m$, the iterate $N_i^m(x)$ is not even in the same
residue class modulo $\fp$ as any of the roots of $f_i$.

\begin{theorem}
Let $f_1, \dots, f_g \in K[x]$ be polynomials of degree at least 2.
Let $N_i(x) = x - \frac{f_i(x)}{f_i'(x)}$ for $i= 1, \dots g$, and let
$\cT_i$ be finite subsets of $K$ such that
no $\cT_i$ contains any $N_i$-preperiodic points.  Then there is a
positive integer $M$ and a positive density set of primes $\cP$
of $K$ such that for any
$i = 1 \dots g$, any $\gamma \in \cT_i$,
any root $\alpha$ of $f_i(x)$, any $m\geq M$, and any
$\fp\in\cP$, we have
$$ N_i^m(\gamma) \not\equiv \alpha \pmod {\fp}.$$
\end{theorem}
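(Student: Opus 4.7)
The plan is to invoke Theorem~\ref{proportion} after observing that every root of $f_i$ is a fixed point of $N_i$. Indeed, if $f_i(x) = (x-\alpha)^k h(x)$ with $h(\alpha)\neq 0$, then $f_i/f_i' = (x-\alpha)h(x)/[kh(x) + (x-\alpha)h'(x)]$, which vanishes at $\alpha$; hence $N_i(\alpha) = \alpha$. Every root of $f_i$ is therefore $N_i$-preperiodic, and the sets of roots will play the role of the ``preperiodic target'' sets $\cA_i$ in Theorem~\ref{proportion}.

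First I would pass to a finite extension $L/K$ over which every $f_i$ splits, and define $\cA_i \subseteq \bP^1(L)$ to be the (finite) set of roots of $f_i$. By the previous observation every point of $\cA_i$ is $N_i$-fixed, and by hypothesis $\cT_i$ consists of points that are not $N_i$-preperiodic. Provided $\deg N_i \geq 2$, Theorem~\ref{proportion} applied over $L$ (with no set $\cA_i$ containing any non-preperiodic point, so that its hypothesis is satisfied trivially) yields an integer $M$ and a positive density set $\cQ$ of primes of $L$ such that $N_i^m(\gamma) \not\equiv \alpha \pmod{\fq}$ for every $i$, every $\gamma \in \cT_i$, every $\alpha \in \cA_i$, every $\fq \in \cQ$, and every $m \geq M$. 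Contracting to $K$, the set $\cP := \{\fq \cap \fo_K : \fq \in \cQ\}$ has positive density, and the congruence $N_i^m(\gamma) \equiv \alpha \pmod{\fp}$ (interpreted via some fixed prime $\tilde\fp$ of $\bK$ above $\fp$) is equivalent to the corresponding congruence modulo $\fq := \tilde\fp \cap \fo_L$, so the conclusion transfers to $K$ without loss.

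The one case not covered is when some $f_i(x) = c(x-a)^d$ has a unique root of multiplicity $d \geq 2$: then $N_i(x) = x - (x-a)/d$ is linear and Theorem~\ref{proportion} does not apply. But a direct computation gives $N_i^m(\gamma) - a = \bigl(\tfrac{d-1}{d}\bigr)^m (\gamma - a)$; since $\gamma \in \cT_i$ is not $N_i$-preperiodic we have $\gamma \neq a$, and for all but finitely many primes $\fp$ of $K$ this quantity is a $\fp$-adic unit and in particular nonzero modulo $\fp$ for every $m \geq 0$. Intersecting with the $\cP$ produced above handles such indices and leaves a positive density set that works uniformly in $i$.

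The main potential obstacle is a bookkeeping issue rather than a substantive one: the root sets $\cA_i$ are not a priori subsets of $\bP^1(K)$, which forces the detour through the splitting field $L$ and some care in making sense of a congruence between a $K$-rational value and an algebraic number modulo a prime of $K$. These are handled exactly as in the opening reduction of the proof of Theorem~\ref{proportion}, and no new ideas are needed beyond that theorem.
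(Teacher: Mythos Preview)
Your approach is essentially the same as the paper's: apply Theorem~\ref{proportion} with $\cA_i$ equal to the set of roots of $f_i$, which are fixed (hence preperiodic) for $N_i$. You are in fact more careful than the paper, which simply asserts the result is immediate; you correctly note the need to pass to a splitting field so that $\cA_i \subseteq \bP^1(L)$, and you handle the edge case $f_i = c(x-a)^d$ where $N_i$ is linear and Theorem~\ref{proportion} does not literally apply---both points the paper silently elides.
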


\begin{proof}
The result is immediate from Theorem~\ref{proportion},
since each root of $f_i$ is a fixed point of $N_i$.
\end{proof}

\section{Heuristics and higher dimensions}\label{counter}

We embarked on this project hoping to prove
the cyclic case of the Dynamical Mordell-Lang Conjecture
for endomorphisms $\Phi$ of $\bP^d$ by the strategy
outlined in \cite{DML}.  (For a more general variant of this conjecture,
see \cite{JNT-Bordeaux, Mike2}.) More precisely, assuming $\Phi$ is defined
over a number field $K$, we had hoped to prove that for each
$\alpha\in \bP^d(K)$, one can \emph{always} find a prime $\fp$
of $K$ such that
for all sufficiently large $n$,
$\Phi^n(\alpha)$ is not congruent modulo $\fp$ to a
point on the ramification divisor of $\Phi$.
This is equivalent to saying that, modulo $\fp$,
the intersection of the ramification divisor and the ``periodic part''
of the forward orbit is empty. (Since any point is preperiodic modulo
$\fp$, it makes sense to divide a forward orbit into its tail and its
periodic part).  When this condition is met, even at a single prime
of suitably good reduction, one can apply the generalized
Skolem-type techniques of \cite{Jason} to prove the cyclic case of the
Dynamical Mordell-Lang Conjecture for $\Phi$ and $\alpha$.

Unfortunately,
a random map model suggests that there may be no such prime when when $d > 4$.
Roughly, the issue is that if
$\Phi: \bP^d \lra \bP^d$, then under certain assumptions of
randomness, an argument akin to the birthday paradox suggests that
the periodic part of the
forward orbit of a point under $\Phi$ should typically
be of order $p^{d/2}$.
Since the proportion of points in $\bP^d(\bF_p)$ that lie on the
ramification divisor should be about $1/p$, this means that for $d
\geq 3$
and $p$ large, the chances are very high that the periodic part of a
given forward orbit passes through the ramification divisor over
$\bF_p$.  In fact, a na\"ive argument would seem to indicate that this chance is so high when $d \geq 3$ that even
taking the product over all $p$, one is left with a nonzero chance
that the periodic part of the forward orbit of a given point passes
through the ramification divisor modulo $p$ for {\em all} $p$.
To our surprise, however, a more thorough analysis shows that the likelihood of periods
intersecting the ramification divisor modulo $p$ is dominated by {\em
  very short} cycles, namely of length $<p \log p$ (rather than
$p^{d/2}$, the  expected length of a period modulo $p$.)
This changes the dimension cutoff so that it is only when
$d \geq 5$ that there is a nonzero chance
that the periodic part of the forward orbit of a given point passes
through the ramification divisor modulo $p$ for all $p$.

In Section~\ref{probmodel}, we explain this random model in some detail
and present evidence that it is accurate in at least some cases.

\begin{remark}
The idea of using random maps to model orbit lengths is not new ---
for (generic) quadratic polynomials in one variable it is at the heart
of Pollard's rho method \cite{pollard75-rho-factoring} for factoring
integers.
Under the random map assumption, Pollard's
method factors an integer $n$ in (roughly) time $p^{1/2}$, where
$p$ is the smallest prime divisor of $n$.  As for the validity of the
random model, unfortunately not much is known.
In \cite{bach91-pollard-rho}, Bach showed that
for a randomly selected quadratic polynomial and starting point,
the random map heuristic correctly predicts the probability
of finding orbits of length about $\log p$.
Further, in \cite{silverman08-mod-p-periods},
Silverman considered general morphisms of
$\mathbb{P}^{n}$ defined over a number field, and
he showed that for any $\epsilon>0$, a random starting
point  has $\gg (\log p)^{1-\epsilon}$ distinct elements
in its orbit modulo $p$ for a full density subset of the primes;
see also \cite{Akbary-Ghioca}.
Silverman \cite{silverman08-mod-p-periods} also conjectured that the
period is greater than $p^{d/2-\epsilon}$
for a full density subset of the primes, and motivated by experimental
data, he also made a more precise conjecture
for quadratic polynomials in dimension~1.
\end{remark}

\subsection{A  probabilistic model for orbits and cycles}
\label{probmodel}

Let $X$ be a (large) finite set, and let $f : X \to X$ be a random map
in the following sense: for each $x \in X$, select the image $f(x)$ by
randomly selecting an element of $X$, with uniform distribution.
\subsubsection{Cycle lengths}
Fix a starting point $x_0 \in X$,
and inductively define $x_{n+1} = f(x_n)$.
Since $X$ is finite, $x_0$ is necessarily preperiodic.
Let $\tau$ be the collision time for the orbit $(x_0,x_1,\ldots)$,
i.e., $\tau$ is the smallest positive integer
such that $x_\tau = x_s$ for some
$s<\tau$.
For any integer $k\geq 0$,
we have
$x_j \not \in \{x_0,x_1,\ldots, x_{j-1}\}$ for all $j \leq k$
if and only if $\tau > k$.
Thus, the randomness assumption on $f$ implies that
$$
\prob(\tau > k) = \prod_{j=1}^k  \Big(1-\frac{j}{|X|}\Big)
=
\exp\Big[ \sum_{j=1}^k \log \Big(1-\frac{j}{|X|}\Big) \Big],
$$
as in the birthday paradox.
From the Taylor series expansion $\log(1-x)=-(x+x^2/2+x^3/3+\ldots)$,
we deduce the inequality
\begin{equation}
  \label{eq:coll-prob-bound}
\prob(\tau > k) \leq \exp \left( - \frac{k(k+1)}{2|X|} \right),
\end{equation}
and similarly we find that for $k = o(|X|^{2/3})$,
\begin{equation}
  \label{eq:collision-asymptotics}
\prob(\tau > k) = \exp \left( - \frac{k^{2}}{2|X|} \right) \cdot (1+o(1)),
\end{equation}
since
\begin{multline}
\prob(\tau > k) = \exp \left( - \frac{k(k+1)}{2|X|} + O(k^{3}/|X|^{2})
\right)
\\=
\exp \left( -\frac{k^{2}}{2|X|} + O(k/|X|+k^{3}/|X|^{2}) \right).
\end{multline}

In addition, if we let $\alpha(k) := \prob(\tau > k-1)$, then
\begin{align*}
\prob(\tau = k)
&= \prob(\tau > k-1) - \prob(\tau > k)
\\
&=
\prod_{j=1}^{k-1}  \Big(1-\frac{j}{|X|}\Big)
- \prod_{j=1}^k  \Big(1-\frac{j}{|X|}\Big)
\\
&=
\Big[1 - \Big(1-\frac{k}{|X|}\Big)\Big]
\cdot \prod_{j=1}^{k-1}  \Big(1-\frac{j}{|X|}\Big)  =
\frac{k}{|X|} \cdot \alpha(k).
\end{align*}

Define $\cycle:=\{x_s=x_{\tau}, x_{s+1}, \ldots , x_{\tau-1}\}$
to be the periodic part of the orbit of $x_0$.
Conditioning on $\tau=k$,
the random map assumption implies that
$x_{k}$ is uniformly selected among $\{x_{0}, \ldots, x_{k-1}\}$,
and hence
$$
\prob\Big(|\cycle|=\ell \Big| \tau = k\Big) = \frac{1}{k}
\quad
\text{for any}
\quad
\ell\leq k.
$$
The cycle length probability may thus be written as
\begin{multline}
\label{eq:cycle-prob-with-alpha}
\prob(|\cycle|=\ell) =
\sum_{k \geq \ell} \prob(|\cycle|=\ell \big| \tau = k) \cdot
\prob(\tau = k)
\\=
\sum_{k \geq \ell} \frac{1}{k} \cdot \prob(\tau = k)
=
\sum_{k \geq \ell} \frac{1}{k} \cdot \frac{k}{|X|} \cdot  \alpha(k) =
\frac{1}{|X|} \sum_{k \geq \ell} \alpha(k).
\end{multline}

Before stating the next Lemma we recall that the {\it Gaussian error
  function} $\erfc$ is defined
as (cf. \cite[Chapter 7]{Abramowitz-Stegun})
$$ \erfc(s):=\frac{2}{\sqrt{\pi}} \int_{s}^{\infty} e^{-t^{2}} \, dt.$$
\begin{lemma}
\label{lem:cycle-length}
If  $\ell = o(|X|^{2/3})$ then, as $|X| \to \infty$,
$$
  \label{eq:cycle-length-probability}
\prob(|\cycle|=\ell) =
\sqrt{\frac{\pi}{2|X|}} \cdot
\left( \erfc
\left( \ell/\sqrt{2|X|}
\right) + o(1)\right).
$$
\end{lemma}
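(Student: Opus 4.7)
The plan is to evaluate the identity $\prob(|\cycle|=\ell) = \frac{1}{|X|}\sum_{k \geq \ell}\alpha(k)$ derived in~\eqref{eq:cycle-prob-with-alpha} by comparing the sum with a Gaussian integral. All error bounds I need will be \emph{additive} of size $o(|X|^{-1/2})$, since $\erfc(\ell/\sqrt{2|X|})$ can itself be very small (when $\ell \gg \sqrt{|X|}$), so a purely multiplicative error on the $\erfc$ factor would be insufficient. Conveniently, the lemma encodes its error as an additive $o(1)$ inside the parentheses, which matches exactly what the approach below produces.

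First I would truncate the sum at a cutoff $T$ with $\sqrt{|X|} \ll T = o(|X|^{2/3})$, for example $T := \lfloor |X|^{3/5}\rfloor$. The tail is controlled by~\eqref{eq:coll-prob-bound}: since $\alpha(k+1) \leq \exp(-k(k+1)/(2|X|))$, a standard comparison with the Gaussian integral gives
\[
\frac{1}{|X|}\sum_{k > T}\alpha(k) \ll \frac{1}{T}\exp\bigl(-T^{2}/(2|X|)\bigr),
\]
which decays faster than any negative power of $|X|$ and is therefore absorbed in the claimed error.

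Next, in the main range $\ell \leq k \leq T$, the asymptotic~\eqref{eq:collision-asymptotics} applies uniformly, giving $\alpha(k) = e^{-k^{2}/(2|X|)}(1+\eta_{k})$ with $\max_{k \leq T}|\eta_{k}| = o(1)$. The contribution of $\eta_{k}$ is bounded by $\frac{o(1)}{|X|}\sum_{k\leq T}e^{-k^{2}/(2|X|)} = o(|X|^{-1/2})$, so it suffices to estimate $\frac{1}{|X|}\sum_{k=\ell}^{T} e^{-k^{2}/(2|X|)}$. Since $t \mapsto e^{-t^{2}/(2|X|)}$ is monotone decreasing on $[0,\infty)$, a Riemann-sum comparison yields
\[
\left|\sum_{k=\ell}^{T} e^{-k^{2}/(2|X|)} - \int_{\ell}^{T} e^{-t^{2}/(2|X|)}\,dt\right| \leq 2,
\]
which contributes $O(1/|X|) = o(|X|^{-1/2})$ after dividing by $|X|$. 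Extending the integral to $\infty$ (the additional piece is again negligible by the tail bound) and substituting $u = t/\sqrt{2|X|}$ gives
\[
\frac{1}{|X|}\int_{\ell}^{\infty}e^{-t^{2}/(2|X|)}\,dt = \sqrt{\frac{2}{|X|}}\int_{\ell/\sqrt{2|X|}}^{\infty} e^{-u^{2}}\,du = \sqrt{\frac{\pi}{2|X|}}\,\erfc\!\left(\frac{\ell}{\sqrt{2|X|}}\right)
\]
directly from the definition of $\erfc$.

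The main technical point—indeed essentially the only one—is arranging each error to be additive of size $o(|X|^{-1/2})$ rather than a multiplicative $(1+o(1))$ factor on a possibly tiny $\erfc$; this forces the specific choice of additive tail bounds and a monotone Riemann-sum comparison, rather than, say, Euler--Maclaurin. Once this is set up, assembling the three contributions (tail $k>T$, the $\eta_{k}$ error in the main range, and the Riemann-sum discretization) yields the claimed asymptotic.
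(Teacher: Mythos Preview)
Your proposal is correct and follows essentially the same route as the paper: truncate the sum from~\eqref{eq:cycle-prob-with-alpha} at a threshold between $\sqrt{|X|}$ and $|X|^{2/3}$, bound the tail via~\eqref{eq:coll-prob-bound}, replace $\alpha(k)$ by $e^{-k^{2}/(2|X|)}$ in the main range using~\eqref{eq:collision-asymptotics}, and compare the resulting sum to the Gaussian integral that yields $\erfc$. The only differences are organizational: the paper uses the cutoff $|X|^{2/3}/\log|X|$ and writes the target sum as $\sum_{k=1}^{|X|}\alpha(k)-\sum_{1\le k<\ell}\alpha(k)$ before estimating each piece, whereas you work directly with $\sum_{k=\ell}^{T}$ and choose $T=\lfloor|X|^{3/5}\rfloor$; your explicit emphasis that all errors must be additive of size $o(|X|^{-1/2})$ is a helpful clarification that the paper leaves implicit.
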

\begin{proof} By \eqref{eq:cycle-prob-with-alpha}, we find that
\begin{equation}
\label{eq:cycle-prob-as-sum}
\prob(|\cycle|=\ell) =
\frac{1}{|X|} \sum_{k = \ell}^{|X|} \alpha(k)
=
\frac{1}{|X|}
\left(
\sum_{k = 1}^{|X|} \alpha(k)
-
\sum_{1 \leq k < \ell} \alpha(k) \right).
\end{equation}
We begin by evaluating the first sum.  Recalling that
$\alpha(k)=\prob(\tau>k-1)$,
if $k = o(|X|^{2/3})$, then by
\eqref{eq:collision-asymptotics}, we have
$$
\alpha(k) = \exp\big( -k^{2}/(2|X|)\big) \cdot \big(1+o(1)\big).
$$
Moreover, by \eqref{eq:coll-prob-bound} the inequality
$$
\alpha(k)  \ll \exp\big( -k^{2}/(3|X|)\big)
$$
holds for  $k \geq 1$.
Thus, setting $Q(T) := T^{2/3}/\log T$, we have
\begin{multline}
\label{eq:long-alpha-sum}
\sum_{k = 1}^{|X|} \alpha(k)
=
\sum_{1 \leq k \leq Q(|X|)} \alpha(k)
+
\sum_{Q(|X|) < k \leq |X|} \alpha(k)
\\=
\big(1+o(1)\big) \cdot \sum_{1 \leq k \leq Q(|X|)} e^{-k^{2}/(2|X|)}
+
O \left(
\int_{Q(|X|)-1}^{\infty}  e^{ -t^{2}/(3|X|)}   \, dt
\right).
\end{multline}
To show that the contribution from the integral is negligible, we note
the inequality (valid for all $A,B>0$)
$$
\int_A^{\infty} e^{-t^{2}/B} \, dt  =
\sqrt{B} \int_{A/\sqrt{B}}^{\infty} e^{-s^{2}} \, ds \leq
\frac{B}{A} \int_{A/\sqrt{B}}^{\infty} s  e^{-s^{2}}
\, ds
=
\frac{B}{2A} e^{-A^2/B}.
$$
Thus,
\begin{multline}
\label{eq:tail-integral-bound}
\int_{Q(|X|)-1}^{\infty}  e^{ -t^{2}/(3|X|)}   \, dt
\leq
\frac{3|X|}{2Q(|X|)-2} \exp
\left(
- \frac{\big(Q(|X|)-1\big)^{2}}{3|X|}
\right)
\\
\ll
|X|^{1/3}\log|X| \exp\left(
-\frac{|X|^{1/3}}{3(\log |X|)^2}\right)
=o(1)
\end{multline}
as $|X|\to \infty$.

Meanwhile, note that for any $L\geq 1$,
\begin{multline*}
\sum_{1 \leq k \leq L} e^{-k^{2}/(2|X|)}
=
\sqrt{2|X|} \cdot \left[
\int_{0}^{\lfloor L\rfloor/\sqrt{2|X|}} e^{-t^{2}} \, dt
+ O \left( \frac{\lfloor L \rfloor}{2|X|} \right) \right]
\\=\sqrt{2|X|} \cdot
\int_{0}^{L/\sqrt{2|X|}} e^{-t^{2}} \, dt
+ O \left( \frac{ L}{\sqrt{2|X|}} + 1 \right).
\end{multline*}
by interpreting the sum
as a $1/\sqrt{2|X|}$-spaced Riemann sum
approximation of an integral and noting that
$|e^{-s^2} - e^{-t^2}|\leq |s-t|$ for all $s,t\in\bR$.
Thus, the sum in the right side of \eqref{eq:long-alpha-sum} is
\begin{multline*}
\sum_{1 \leq k \leq Q(|X|)} e^{-k^{2}/(2|X|)}
=
\sqrt{2|X|} \cdot
\int_{0}^{Q(|X|)/\sqrt{2|X|}} e^{-t^{2}} \, dt
+ O \left( \frac{Q(|X|)}{\sqrt{2|X|}}  + 1\right)
\\=
\sqrt{2|X|} \cdot
\left( \int_{0}^{\infty} e^{-t^{2}} \, dt + o(1) \right),
\end{multline*}
and the second sum on the right side of
\eqref{eq:cycle-prob-as-sum} is
\begin{multline*}
\sum_{1 \leq k < \ell} \alpha(k)  =
\big(1+o(1)\big) \cdot \sum_{k=1}^{\ell-1} e^{-k^{2}/(2|X|)}
\\=
\big(1+o(1)\big) \cdot
\left(
\sqrt{2 |X|} \int_{0}^{\ell/\sqrt{2 |X|}} e^{-t^{2}} \, dt +
O\Big(\frac{\ell}{\sqrt{2|X|}}+1)
\right).
\end{multline*}
Combining equations
\eqref{eq:cycle-prob-as-sum},
\eqref{eq:long-alpha-sum}, and
\eqref{eq:tail-integral-bound} with the above
Riemann sum estimates, and recalling that
$\erfc(s)=\frac{2}{\sqrt{\pi}} \int_{s}^{\infty} e^{-t^{2}} \, dt$,
we have
\begin{multline*}
\prob(|\cycle| = \ell) = \frac{\sqrt{2}
\cdot \int_{\ell/\sqrt{2 |X|}}^{\infty} e^{-t^{2}} \, dt +
o(1)}{|X|^{1/2}}
\\=
\sqrt{\frac{\pi}{2|X|}} \cdot
\left( \erfc \left( \ell/\sqrt{2|X|} \right) + o(1)\right).
\qedhere
\end{multline*}
\end{proof}


\subsubsection{Cycles intersecting the ramification locus}
\label{sec:prob-cycl-inters}

We now specialize to polynomial maps: let $\phi : \A^{d}(\Z) \to
\A^{d}(\Z)$ be a polynomial map such that its Jacobian matrix $d\phi$
has non-constant determinant, and fix a starting point
$x_{0} \in \A^{d}(\Z)$.
Given a prime $p$, let $X_{p} = \A^{d}(\ffp)$,
and denote by $\phi_{p} : X_{p} \to X_{p}$ the reduction
of $\phi$ modulo $p$.  Further, let $\cycle_{p}$ denote the periodic
part of the forward orbit of $x_{0}$ under $\phi_{p}$, and let
$\ramification_{p}$ denote the ramification locus of $\phi_{p}$.
We say $\phi$ has {\em random map behavior} modulo $p$ if the
following two conditions hold:
\begin{itemize}
\item $|\cycle_{p}|$ has the same probability distribution as
  the cycle length of a random map on a set of size $p^{d}$.

\item The probability of a collection of distinct points
  $y_{1},\ldots,y_{k} \in \cycle_{p}$ all belonging to
  $\ramification_{p}$ is $1/p^{k}$.

\end{itemize}

By the Weil bounds, $|\ramification_{p}| = p^{d-1}\cdot (1+o(1))$,
since $\ramification_{p}$ is a hypersurface (assumed irreducible for
simplicity) defined by the vanishing of the determinant of
the Jacobian of the map $\phi_{p}$.  Thus, the main thrust of the second
assumption above is that the sets $\cycle_{p}$ and $\ramification_{p}$ are
suitably independent.

\begin{proposition}
\label{prop:empty-intersection-probability}
  Assume that the polynomial map $\phi : \A^d(\Z)\to\A^d(\Z)$
  has random map behavior modulo every sufficiently large prime $p$.
  If $d \geq 3$, then
$$
\prob( \cycle_{p} \cap \ramification_{p} = \emptyset)
=
\frac{\sqrt{\pi/2}}{p^{d/2-1}} \cdot (1+o(1))
\quad
\text{as }
p\to\infty.
$$
\end{proposition}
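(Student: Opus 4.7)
The plan is to condition on the cycle length and then exploit the independence built into the random map assumptions. By the law of total probability,
\[
\prob(\cycle_p \cap \ramification_p = \emptyset) = \sum_{\ell \geq 1} \prob(|\cycle_p|=\ell)\cdot \prob\bigl(\cycle_p \cap \ramification_p = \emptyset \bigm| |\cycle_p|=\ell\bigr).
\]
Given $|\cycle_p| = \ell$, the second random map assumption says that each $k$-element subset of $\cycle_p$ has probability $p^{-k}$ of being contained in $\ramification_p$, so inclusion--exclusion gives
\[
\prob\bigl(\cycle_p \cap \ramification_p = \emptyset \bigm| |\cycle_p|=\ell\bigr) = \sum_{k=0}^\ell \binom{\ell}{k}\Bigl(-\frac{1}{p}\Bigr)^k = \Bigl(1-\frac{1}{p}\Bigr)^\ell.
\]

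Next I would truncate the sum over $\ell$ at $T := p(\log p)^2$. The tail is dominated by
\[
\sum_{\ell > T} \prob(|\cycle_p|=\ell) \Bigl(1-\frac{1}{p}\Bigr)^\ell \;\leq\; \Bigl(1-\frac{1}{p}\Bigr)^T \;\leq\; e^{-(\log p)^2},
\]
which decays faster than any negative power of $p$ and is thus negligible compared with the target $p^{-(d/2-1)}$. In the remaining range $1 \leq \ell \leq T$, the hypothesis $d \geq 3$ ensures $T = o(p^{d/2}) = o((p^d)^{2/3})$, so Lemma~\ref{lem:cycle-length} applies with $|X| = p^d$. Moreover $\ell/\sqrt{2p^d} = o(1)$ uniformly in that range, and $\erfc(0) = 1$, so
\[
\prob(|\cycle_p| = \ell) = \sqrt{\frac{\pi}{2p^d}}\bigl(1 + o(1)\bigr) \qquad \text{uniformly for } 1 \leq \ell \leq T.
\]

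Finally, summing the geometric weights and using $\sum_{\ell=1}^\infty (1-1/p)^\ell = p-1$ together with the fact that the complementary tail is exponentially small, I would conclude
\[
\sum_{\ell=1}^T \prob(|\cycle_p|=\ell)\Bigl(1-\frac{1}{p}\Bigr)^\ell = \sqrt{\frac{\pi}{2p^d}}\bigl(1+o(1)\bigr)\cdot(p-1)\bigl(1+o(1)\bigr) = \frac{\sqrt{\pi/2}}{p^{d/2-1}}\bigl(1+o(1)\bigr),
\]
which is the claimed asymptotic. The main obstacle is ensuring that the $o(1)$ term supplied by Lemma~\ref{lem:cycle-length} is uniform across the entire truncation range $\ell \leq T$ rather than pointwise in $\ell$; this amounts to verifying that the error estimates in its proof depend only on $|X|$ (and on $\ell/\sqrt{|X|}$ staying bounded), which they do. The restriction $d \geq 3$ enters in exactly one place: it is precisely what lets the truncation threshold $T$ be chosen well beyond the scale $p$ on which the geometric weights $(1-1/p)^\ell$ decay, while still remaining inside the asymptotic regime of the cycle-length lemma.
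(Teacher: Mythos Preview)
Your argument is correct and follows essentially the same route as the paper: condition on the cycle length to get the factor $(1-1/p)^{\ell}$, truncate the sum at a threshold of order $p\log p$ so the tail is negligible, apply the cycle-length lemma uniformly on the truncated range (where $\erfc(\ell/\sqrt{2p^{d}})\to 1$ since $d\geq 3$), and sum the resulting geometric series. The only cosmetic differences are your choice of cutoff $T=p(\log p)^{2}$ versus the paper's $dp\log p$ and your explicit inclusion--exclusion derivation of $(1-1/p)^{\ell}$.
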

\begin{proof}
  Fix a large enough prime $p$.
  For simplicity of notation, we will write $\cycle$ and
  $\ramification$ instead of $\cycle_{p}$ and $\ramification_{p}$.
  Conditioning on
  the cycle length $|\cycle|$ being equal to $\ell$, we
  find that $\prob\Big( \cycle \cap \ramification = \emptyset \Big|
  |\cycle|=\ell\Big) = (1-1/p)^{\ell}$, and hence
$$
\prob( \cycle \cap \ramification = \emptyset)
=
\sum_{\ell=1}^{p^{d}} (1-1/p)^{\ell} \cdot \prob(|\cycle|=\ell).
$$
We start by bounding the contribution from the large cycles.
Since $(1-1/p)^{\ell}$ is
a decreasing function of $\ell$
and
$\sum_{\ell=1}^{p^{d}} \prob(|\cycle|=\ell) = 1$, we have
$$
\sum_{\ell \geq d p \log p}^{p^{d}} \Big(1-\frac{1}{p}\Big)^{\ell} \cdot
\prob(|\cycle|=\ell)
\leq
\Big(1-\frac{1}{p}\Big)^{d p \log p}
\\
\ll \exp( - d \log p ) = p^{-d}.
$$

To determine the contribution from the short cycles we argue as
follows.  By Lemma~\ref{lem:cycle-length},
for $\ell \leq d p \log p = o(p^{d/2})$, we have
$$
\prob(|\cycle|=\ell)
=
\sqrt{\frac{\pi}{2p^{d}}} \cdot
\left( \erfc
\left( \ell/\sqrt{2p^{d}}
\right) + o(1)\right)
=
\sqrt{\frac{\pi}{2p^{d}}} \cdot (1+o(1))
$$
since $\erfc(0)=1$.
Hence,
$$
\sum_{\ell=1}^{dp\log p}
\Big(1-\frac{1}{p}\Big)^{\ell} \cdot \prob(|\cycle|=\ell)
=
\big(1+o(1)\big) \cdot \sqrt{\frac{\pi}{2p^{d}}} \cdot
\sum_{\ell=1}^{dp\log p}
\sum_{1 \leq \ell < d p \log p}
\Big(1-\frac{1}{p}\Big)^{\ell},
$$
which, on summing the geometric series, equals
\[
\big(1+o(1)\big) \cdot \sqrt{\frac{\pi}{2p^{d}}} \cdot
\frac{1-O(p^{-d})}{1-(1-1/p)}
=
(\sqrt{\pi/2}+o(1))  \cdot p^{1-d/2}.
\qedhere
\]
\end{proof}

\begin{remark}
\label{rem:d=2prob}
For $d=2$ a similar argument gives that as $p \to \infty$,
\begin{multline*}
\prob(\cycle_{p} \cap \ramification_{p} = \emptyset)
= \big(1+o(1)\big) \cdot
\sum_{\ell \leq p^{2} } (1-1/p)^{\ell} \sqrt{\pi/(2p^{2})}
\erfc(\ell/\sqrt{2p^{2}})
\\= \big(1+o(1)\big) \cdot
\sqrt{\pi} \int_{0}^{\infty} e^{-\sqrt{2}t} \erfc(t) \, dt
\approx (1+o(1)) \cdot 0.598 .
\end{multline*}
For $d=1$ it is easy to see that $\prob(\cycle_{p} \cap
\ramification_{p} = \emptyset) = 1+o(1)$ as $p \to \infty$ as follows:
since
$|\ramification_{p}| = O(1)$,
$$
\prob( \cycle_{p} \cap \ramification_{p} = \emptyset \Big|
|\cycle_{p}| < p^{1/2} \log p) > (1-O(1/p))^{p^{1/2} \log p} = 1+o(1)
$$
and, by \eqref{eq:coll-prob-bound},
\begin{multline*}
\prob( |\cycle_{p}| \geq p^{1/2}
\log p) \leq \prob( \tau \geq p^{1/2} \log p)
\\=
\exp( -(\log p)^{2}/2)(1+o(1)) = o(1).
\end{multline*}

\end{remark}

\subsubsection{Global probabilities in higher dimensions}
\label{sec:glob-prob-high}

Since it is enough to find {\em one} prime $p$ for which $\cycle_{p}
\cap \ramification_{p} = \emptyset$, the ``probability'' that our
approach fails --- assuming good reduction of the map for all primes, as
well as the random map model being applicable and that the ``events''
$\cycle_{p} \cap \ramification_{p} = \emptyset$ are independent for
different $p$ --- in dimension $d \geq 3$ is, by
Proposition~\ref{prop:empty-intersection-probability}, given by an
Euler product of the form
$$
\prod_{p}
\left(
1-\frac{\sqrt{\pi/2}+o(1)}{p^{d/2-1}}
\right)
=
\prod_{p}
\left(
1-O(p^{1-d/2})
\right).
$$
Since the product diverges to zero if $d = 3,4$, we would in this case expect
to find at least one (if not infinitely many) primes for which
$\cycle_{p} \cap \ramification_{p} = \emptyset$.
On the other hand, if $d \geq 5$ the product converges, and hence
there is a non-vanishing probability that $\cycle_{p} \cap
\ramification_{p} \neq \emptyset$ {\em for all} primes.

\subsection{Numerical evidence for the random model}
\label{sec:numer-exper}


Letting $\tilde{c} := \frac{|C|}{\sqrt{2|X|}}$ denote the
normalized cycle length, it is straightforward to deduce from
Lemma~\ref{lem:cycle-length} that the probability density function of
$\tilde{c}$ is given by
\begin{equation}
  \label{eq:normalized-cycle-pdf}
g(s) = \sqrt{\pi} \cdot \erfc(s),
\end{equation}
i.e., that
$$
\prob( \tilde{c} \leq t ) = \int_{0}^{t} g(s) \, ds.
$$
In this section, we shall compare observed cycle lengths
with this prediction in dimensions one and three.

\subsubsection{Cycle lengths in dimension $d=1$}
Consider the map $x \to f(x)$, where $f(x) = x^{2}+x+2$,
with starting point $x_0=1$.
For each prime $p < 100000$, we computed
the normalized cycle length $\tilde{c}_{p} := |\cycle_{p}|/\sqrt{2p}$.
A histogram plot of $\{\tilde{c}_{p}\}_{p<N}$ for the resulting data
appears in Figure~\ref{fig:histdim1}, along with the probability
density function $g(t) = \sqrt{\pi} \cdot \erfc(t)$
from \eqref{eq:normalized-cycle-pdf}.


\begin{figure}[h!]
\centering
\includegraphics[width=6cm,]{./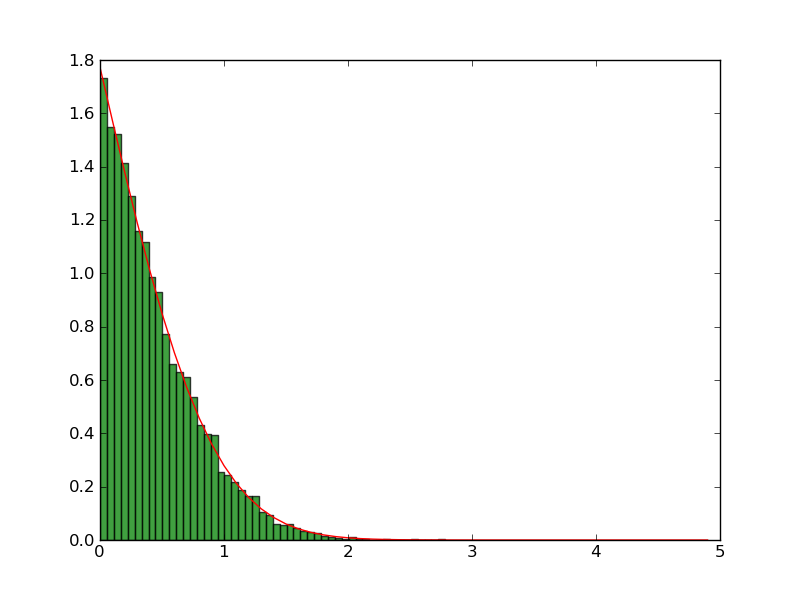}
\caption{\label{fig:histdim1}Normalized cycle length statistics for
  $p<100000$.}
\end{figure}

\subsubsection{Cycle lengths in dimension $d=3$}
Next,
consider the map
$$
(x_{1},x_{2},x_{3}) \to (f_{1}(x_{1},x_{2},x_{3}),
f_{2}(x_{1},x_{2},x_{3}), f_{3}(x_{1},x_{2},x_{3})) ,
$$
where
$$
f_{1}(x_{1},x_{2},x_{3}) =
x_1^{2} + 2 x_1 x_2 -3 x_1 x_3 + 4x_2^{2} + 5x_2x_3 + 6x_3^{2} +
7x_1 + 8x_2 + 9x_3 + 11,
$$
$$
f_{2}(x_{1},x_{2},x_{3}) =
2x_1^{2} + 3 x_1 x_2 +4 x_1 x_3 + 5x_2^{2} + 6x_2x_3 + 10x_3^{2} +
1x_1 + 8x_2 + 3x_3 + 7,
$$
$$
f_{3}(x_{1},x_{2},x_{3}) =
3x_1^{2} + 4 x_1 x_2 +5 x_1 x_3 + 6x_2^{2} + 17x_2x_3 + 11x_3^{2} +
2x_1 + 8x_2 + 5x_3 + 121.
$$

%

With starting point $x_0=(1,2,3)$,
we proceed as we did in dimension one, except that
now the normalized cycle length is
$\tilde{c}_{p} := |\cycle_{p}|/\sqrt{2p^3}$,
and we consider only $p<21000$.  The resulting histogram
and expected probability density function appear in
Figure~\ref{fig:histdim3}.

\begin{figure}[h!]
\centering
\includegraphics[width=6cm,]{./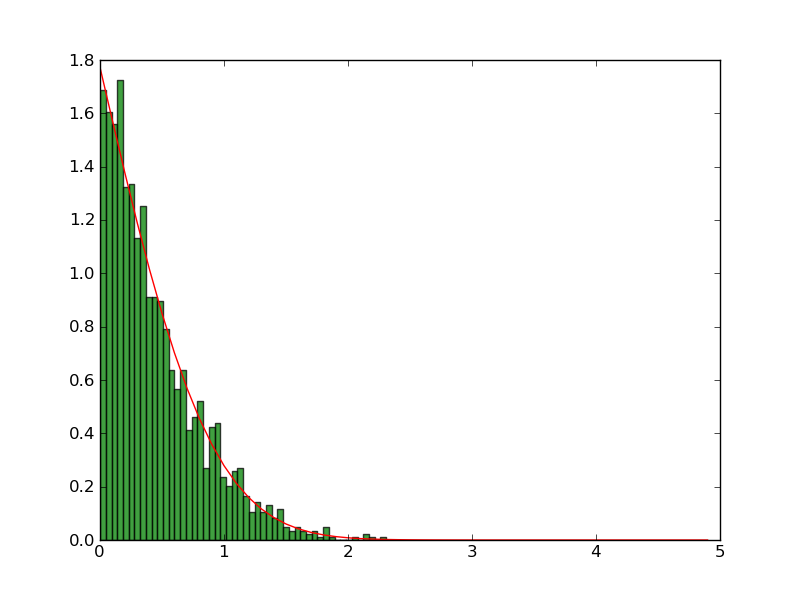}
\caption{\label{fig:histdim3}Normalized cycle length statistics for $p<21000$.}
\end{figure}

\subsubsection{Probability that $\ramification_{p} \cap \cycle_{p} = \emptyset$}

To further check the accuracy of the predictions of
Section~\ref{sec:prob-cycl-inters},
we randomly generated $50000$  degree~$2$ polynomial maps
$\varphi:\bA^d \to \bA^d$ with integer coefficients
and checked how often the forward orbit of $(0,\ldots,0)$
had a periodic point on the ramification divisor modulo~$p$;
see Figure~\ref{figure3}.
In both cases, the data shows that as $p$ increases, the probability
quickly goes to $0$ in dimension~1, remains roughly constant in
dimension~2 (compare with Remark~\ref{rem:d=2prob} and note that $1-0.598
\simeq 0.4$), and quickly goes to $1$ in dimension~$3$, as suggested by
our model.

\begin{figure}[h!]
    \includegraphics[angle=0,scale=.38]{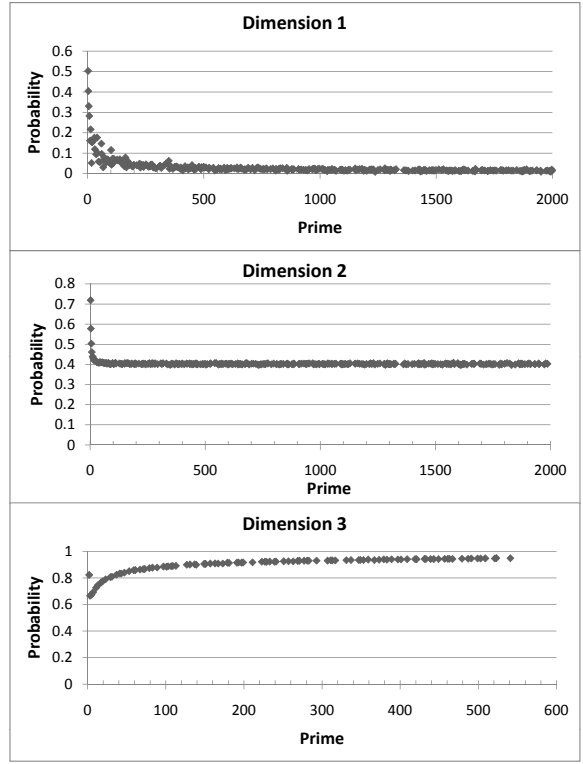}
    \caption{Probability that the forward orbit of a point
      has a periodic point lying on the ramification divisor modulo $p$}
    \label{figure3}
\end{figure}

%

For $d=3$, we also compared the number of primes $p<N$ for which $\cycle_{p}
\cap \ramification_{p} = \emptyset$ with the prediction given by the
random map model.  Thus, given $p$, let $X_{p} = 1$ if $\cycle_{p}
\cap \ramification_{p} = \emptyset$, or $X_p=0$ otherwise.
According to Proposition~\ref{prop:empty-intersection-probability},
$\prob(X_{p} = 1 )$ should be
$\sqrt{\pi/(2p)} \cdot (1+o(1))$.  To test this
prediction,
Figure~\ref{fig:random_model} compares
$$
S(N) := \sum_{p < N} X_{p}
$$
with its expected value $\sum_{p < N} \sqrt{\pi/(2p)}$. For the first
$5000$ primes $p$ we estimated $\prob(X_{p}=1)$ by taking a collection
of $50$ different polynomial maps of degree two, and for each $p$ we
computed the proportion of maps for which $\cycle_{p} \cap
\ramification_{p} = \emptyset$.
Once again, as Figure~\ref{fig:random_model} shows, the the data
agrees very closely with the predictions of the random map model.

\begin{figure}[h!]
    \includegraphics[angle=0,scale=.38]{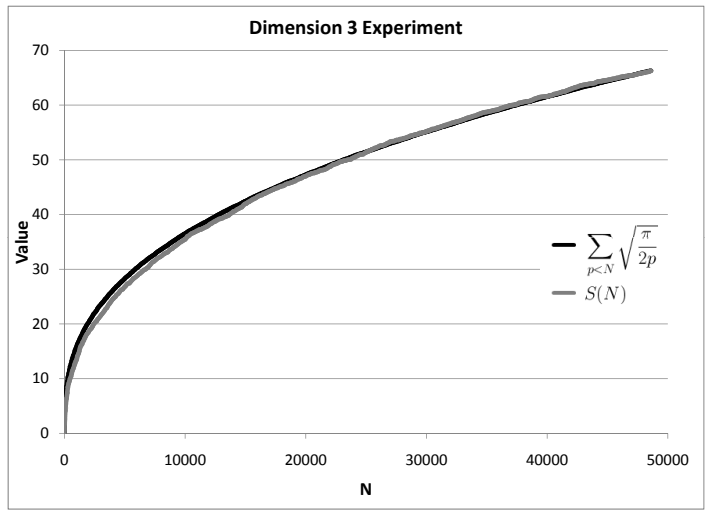}
    \caption{$S(N)$ compared to $\sum_{p < N} \sqrt{\pi/(2p)}$.}
    \label{fig:random_model}
\end{figure}


\section{Further questions}
\label{questions}

It is natural to ask whether Theorem~\ref{proportion}
is true if we remove the
restriction that at most one element of $\bigcup_{i=1}^g \cA_i$ is not
$\varphi_i$-preperiodic.  Unfortunately, our method does not
extend even to the case that $g =1 $ and $\cA_1 =\{ \alpha_1, \alpha_2 \}$
if neither $\alpha_1$ nor $\alpha_2$ is $\varphi_1$-preperiodic.
Indeed, the proof of Theorem~\ref{proportion} uses
\cite[Lemma~4.3]{DML}, which relies on Roth's theorem and
\cite{SilSiegel}, and it is not at all clear how to extend those methods
to the case of more than one wandering point.

Here is one particularly simple question that we have been
unable to treat with our methods.

\begin{question}\label{two_pts}
  Let $K$ be a number field, let $\varphi: \bP_K^1 \lra \bP_K^1$
  be a rational map of degree at least 2, and let $\gamma_1, \gamma_2
  \in \bP^1(K)$ be nonperiodic for $\varphi$. Are there infinitely
  primes $\fp$ of $K$ such that neither $\gamma_1$ nor $\gamma_2$ is
  periodic modulo $\fp$?  Is the density of such primes positive?
\end{question}

The answer to Question~\ref{two_pts} is trivially ``yes'' if
both $\gamma_1$ and $\gamma_2$ are preperiodic.  In addition,
Theorem~\ref{proportion} also gives a positive answer if
only one of the points is preperiodic, or if $\phi^m(\gamma_1)=\gamma_2$
for some $m\geq 0$.  Other than these special arrangements, however,
we know of very few cases in which we can answer
Question~\ref{two_pts}.
One such case, again with a positive answer, is the case that
$\gamma_1=0$ and $\varphi(x)=x^2 + c$, where $c$ an integer other than
$0$ or $-1$.  The proof is like that of Theorem~\ref{quadratic2}:
since the intersection of
a set of positive density with a set of density~1 has positive
density, the result follows by combining Theorem~\ref{proportion}
with the results of \cite{Jones}.

In a different direction,
one might also ask for a higher-dimensional version of
Theorem~\ref{proportion} involving points rather than hypersurfaces.

\begin{question}\label{q1}
Let $K$ be a number field, let $N  > 1$, let $\Phi: \bP_K^N \lra \bP_K^N$
  be a morphism of degree at least 2,
  and let $\gamma_1, \gamma_2 \in \bP^N(K)$.
  Suppose that there is no $m\geq 0$ such that
  $\Phi^m(\gamma_1) = \gamma_2$.  Are there infinitely many primes
  $\fp$ of $K$ such that
  $\Phi^m(\gamma_1) \not\equiv \gamma_2 \pmod{\fp}$ for all $m$?
  Is the density of such primes positive?
\end{question}

In contrast to the case of orbits intersecting hypersurfaces
as in Section~\ref{counter},
it appears to be \emph{less} likely that
the orbit of a point passes through another point modulo a prime in
higher dimensions than in dimension 1.
Indeed, the same orbit length heuristics suggest
that at a given prime $p$ of $\bQ$, there is a
$\frac{1}{p^{N/2}}$ chance that the orbit of $\gamma_1$ meets
$\gamma_2$ modulo $p$.
Because $\prod_p(1 - \frac{1}{p^{N/2}}) > 0$ for $N\geq 3$,
the reasoning of Section~\ref{counter} would suggest
that there is a positive chance that in fact
$\Phi^m(\gamma_1) \not\equiv \gamma_2 \pmod{p}$
for all $m$ and all primes $p$.

It is not difficult to construct explicit examples where this happens
if the orbit of $\gamma_1$ lies on a proper preperiodic subvariety of $\bP^N$
that does not contain $\gamma_2$; it would be interesting to find
examples where this happens when $\gamma_1$ has a Zariski dense
forward orbit.  Note also that Question~\ref{q1} has a negative
answer if $\Phi$ is not a morphism, or if it is a morphism of degree
one.

By a result of Fakhruddin \cite{Fa}, a positive answer to
Question~\ref{q1} for maps  $\Phi: \bP_K^N \lra \bP_K^N$ would give a
positive answer for any polarizable self-map $f: X \lra X$ of
projective varieties.
(A map $f:X \lra X$ is said to be \emph{polarizable} if there is
an ample divisor $L$ such that $f^*L \cong L^{\otimes d}$
for some $d > 1$; see \cite{ZhangLec}.)  In
particular, one would have a reasonable dynamical generalization of
\cite{Pink-abelian}.
However, proving that Question~\ref{q1} has a positive answer may require new
techniques.  It is not clear to us how to modify the arguments in this paper
to treat this higher-dimensional problem.

\textbf{Acknowledgements.}  R.B. gratefully
acknowledges the support of NSF grant DMS-0901494.
D.G. was partially supported by NSERC.
P.K. was supported in part by grants from the
G\"oran Gustafsson Foundation, the Knut and Alice Wallenberg
foundation, and the Swedish Research Council.
T.S. was partially supported by NSF grants DMS-0854998 and DMS-1001550.
T.T. was partially
supported by NSF grants DMS-0801072 and DMS-0854839.  Our
computations in Section~\ref{counter} were
done with CPU time provided by the Research Computing Cluster at the
CUNY Graduate Center.  The authors would like to thank Rafe Jones,
Adam Towsley, and Michael Zieve for helpful conversations,
and also thank the Centro di Ricerca Matematica Ennio De Giorgi for
its hospitality in the summer of 2009 when this project was started.

\bibliographystyle{amsalpha}
\bibliography{Drinbib}

\end{document}